\newtheorem{theorem}{Theorem}[section]
\newtheorem{lemma}[theorem]{Lemma}
\newtheorem{corollary}[theorem]{Corollary}
\newtheorem{proposition}[theorem]{Proposition}
\theoremstyle{definition}
\theoremstyle{remark}
\newtheorem{remark}[theorem]{Remark}
\numberwithin{equation}{section}
\newcommand{\norm}[1]{\left\lVert #1 \right\rVert}
\newcommand{\abs}[1]{\left\lvert #1 \right\rvert}
\def\IR{\mathbb{R}}
\def\EE{\mathcal{E}}
\def\GG{\mathcal{G}}
\def\FF{\mathcal{F}}
\def\d{\mathrm{d}}
\def\cF{\mathcal{F}}
\def\bL{\mathbf{L}}
\def\bS{\mathbf{S}}
\def\N{\mathbb{N}}
\def\T{\mathbb{T}}
\def\Cap{\text{Cap}}
\def\loc{\text{loc}}
\DeclareMathOperator{\supp}{supp}
\begin{document}
	
	\title{Regular subspaces of symmetric stable processes}
	
	
	\author{Dongjian Qian}
	\address{School of Mathematical Sciences, Fudan University, Shanghai 200433, People's Republic of China}
	\curraddr{}
	\email{djqian22@m.fudan.edu.cn}
	\thanks{}
	
	\author{Jiangang Ying}
	\address{School of Mathematical Sciences, Fudan University, Shanghai 200433, People's Republic of China}
	\curraddr{}
	\email{jgying@fudan.edu.cn}
	\thanks{The second author was supported by NSFC grant No. 11871162}
	
	\author{Yushu Zheng}
	\address{Shanghai Center for Mathematical Sciences, Fudan University, Shanghai 200433, People's Republic of China}
	\curraddr{}
	\email{yszheng666@gmail.com}
	\thanks{}
	
	\subjclass[2010]{Primary 31C25, 60J46; Secondary 60G51, 60G52 }
	
	\keywords{Dirichlet forms, Regular Dirichlet subspaces, Stable processes, L\'evy processes}
	
	\date{}
	
	\dedicatory{}
	
	\begin{abstract}
		Roughly speaking, regular subspaces are regular Dirichlet forms that inherit the original forms with smaller domains. In this paper, regular subspaces of 1-dim symmetric $\alpha$-stable processes are considered. The main result
		is that 
		it admits proper regular subspaces if and only if $\alpha\in [1,2]$.
		Moreover, for $\alpha\in(1,2)$, the characterization of the regular subspaces is given. General 1-dim symmetric L\'evy processes will also be investigated. It will be shown that whether it has proper regular subspaces is closely related to whether its sample paths have finite variation.
	\end{abstract}
	
	\maketitle

	\section{Introduction}
	The theory
	of Dirichlet space was formulated by A. Beurling and J. Deny \cite{beurling1959dirichlet} in 1959, which is
	an axiomatic extension of classical
	Dirichlet integrals in the direction of Markovian
	semigroups,
	and related to symmetric Markov processes naturally
	in terms of the regularity condition presented by M. Fukushima
	\cite{fukushima1971regular} in 1971, which paves a new approach
	to obtain Markov processes and has attracted
	a lot of attention since then.
	Let $(\mathcal{E},\mathcal{F})$ be a Dirichlet form on $L^2(E,m)$, where $E$ is a nice topological space and $m$ is a Radon measure. These notations will not be detailed since we mainly consider $L^2(\mathbb{R},\d x)$ in this paper. $(\mathcal{E},\mathcal{F})$ is called regular if $C_c\cap
	\mathcal{F}$ is dense in $\mathcal{F}$ under the norm $\mathcal{E}_1^{1/2}:=(\mathcal{E}+(\cdot,\cdot)_{L_2})^{1/2}$ and dense in $C_c$ under the uniform norm, where $C_c$ represents the space of continuous functions on $E$ with compact support. It was proved by Fukushima \cite{fukushima1971regular} that a regular Dirichlet form is always associated with a Hunt process and for details, refer to \cite{fukushima1980dirichlet,fukushima2010dirichlet}.
	
	In this paper, we shall consider the problem regarding regular subspaces
	of Dirichlet forms.
	Precisely, let $(\mathcal{E}^1,\mathcal{F}^1)$ and $(\mathcal{E}^2,\mathcal{F}^2)$ be two Dirichlet forms on $L^2(E,m)$. We call $(\mathcal{E}^2,\mathcal{F}^2)$ a (Dirichlet) subspace of $(\mathcal{E}^1,\mathcal{F}^1)$, and an (Dirichlet) extension conversely, if
	$$
	\mathcal{F}^2\subset\mathcal{F}^1;\, \mathcal{E}^2(u,v)=\mathcal{E}^1(u,v),\, \forall\, u,v\in \mathcal{F}^2.
	$$
	When both $(\EE^1,\FF^1)$ and $(\EE^2,\FF^2)$ are regular, we call
	$(\mathcal{E}^2,\mathcal{F}^2)$ a regular  subspace of $(\mathcal{E}^1,\mathcal{F}^1)$,
	and regular  extension conversely.
	When $\mathcal{F}^2\neq\mathcal{F}^1$, we call $\mathcal{F}^2$ a proper regular subspace. This concept is closely related to Silverstein subspaces and the readers may refer to \cite{he2022silverstein} for details. There are three basic questions we are interested in
	\begin{enumerate}
		\item Does a regular Dirichlet form have proper regular subspaces/extensions?
		\item How to characterize all regular subspaces/extensions if it does?
		\item How is the process associated with a regular subspace/extension related to
		the process associated with the original form?
	\end{enumerate}
	
	Although the problem is not hard to be understood, there have been not many results yet. 
	The first
	result in this direction was presented in 2005 \cite{fang2010regular}, in which
	the authors, including M. Fukushima and the second author of the current paper, 
	investigated the problem of regular subspaces for the Dirichlet form associated with the 1-dim Brownian motion, namely the Sobolev space $H^1$ with the form
	$$
	\frac{1}{2}\mathcal{D}(u,v)=\frac{1}{2}\int_{\mathbb{R}}u^\prime(x) v^\prime(x) \d x,\quad u,v \in H^1.
	$$
	For any strictly increasing and continuous function $s$,
	called a scale function, on $\IR$,
	define a symmetric Dirichlet form $(\mathcal{E}^{(s)},\mathcal{F}^{(s)})$ on $L^2(\mathbb{R})$ as
	\begin{align*}
		\left\{\begin{aligned}
			&\mathcal{F}^{(s)}=\Big\{u\in L^2(\mathbb{R}):u\ll s, \int_ {\mathbb{R}}\Big(\frac{\d u}{\d s}\Big)^2 \d s<\infty\Big\};\\
			&\mathcal{E}^{(s)}(u,v):=\frac{1}{2}\int_ {\mathbb{R}}\frac{\d u}{\d s}\frac{\d v}{\d s}\d s.
		\end{aligned}\right.
	\end{align*}
	where $u\ll s$ means $u$ is absolutely continuous with respect to $s$.
	It was shown in \cite{fang2010regular}  that $(\mathcal{E},\mathcal{F})$ is a regular subspace of $(\frac{1}{2}\mathcal{D},H^1$) if and only if
	$(\EE,\FF)=(\EE^{(s)},\FF^{(s)})$ with $s$ satisfying
	$$
	\d s\ll \d x, \text{and}\ s^\prime =0 \text{ or } 1 \  a.e.
	$$
	Hence this gives a complete characterization of regular subspaces for 1-dim Brownian motion. 
	Actually, the question regarding one-dimensional diffusions has been completely
	solved, including the characterization of irreducible regular extensions of $(\frac{1}{2}\mathcal{D},H^1)$, 
	in \cite{fang2010dirichlet}. 
	General regular extensions of Brownian motion and 1-dim diffusions can be
	characterized with the help of effective intervals in
	\cite{li2019symmetric,li2020effective}.
	It is seen that the process associated with the regular subspace of Brownian motion
	is a diffusion with a scale function of Cantor-type, which can not be described
	by an SDE. Hence
	the problem is not only fundamental in the theory of
	Dirichlet forms, but also important probabilistically
	since every regular subspace or extension gives birth to
	a symmetric Markov process, which may not be known to us before.
	
	However, for other processes, few results have been obtained.
	Some results were given in \cite{li2015regular}. 
	Brownian motion with dimension $d>1$ admits proper regular subspaces,
	though a complete characterization is still open.
	The Dirichlet form
	associated with a symmetric step process 
	(for example, a symmetric pure jump L\'evy process with finite
	L\'evy measure) admits no proper
	regular subspaces.
	It was also proved there that 
	the jump measure and killing measure of a Dirichlet form will be inherited by its regular subspaces.
	
	It is then natural to ask if a pure jump Dirichlet form
	with infinite jump measure, including symmetric stable processes,
	admit 
	proper regular subspaces. Actually in this case 
	an example admitting proper regular subspace was given
	%
	in terms of the trace space of Brownian motion \cite{li2019regular}.
	
	In this paper, we focus on 1-dim symmetric $\alpha$-stable processes ($0<\alpha< 2$). The associated Dirichlet forms are the fractional Sobolev space
	\begin{align*}
		H^{\alpha/2}=\left\{f \in L^{2}(\mathbb{R}):\int_{\mathbb{R}}|\widehat{f}(\xi)|^2(1+|\xi|^{\alpha})\d\xi <\infty \right\},
	\end{align*}
	where $\widehat{f}(\xi)$ is the Fourier transform of $f$, 
	equipped with the form
	\begin{align*} \mathcal{E}(f,g)=\EE^{\alpha/2}(f,g)=\int_{\mathbb{R}}\widehat{f}(\xi)\bar{\widehat{g}}(\xi)|\xi|^{\alpha}\d\xi.
	\end{align*}
	It's shown in \cite{fukushima2010dirichlet} that it can also be expressed as
	\begin{align}\label{def}
		\left\{\begin{aligned}
			&H^{\alpha/2}=\left\{f \in L^{2}(\mathbb{R}):\int_{\mathbb{R}^2}\dfrac{(f(x)-f(y))^2}{|x-y|^{1+\alpha}}\d x\d y<\infty \right\};\\
			&\mathcal{E}(f,g)=
			\frac{C(\alpha)}{2}\int_{\mathbb{R}^2}\dfrac{(f(x)-f(y))(g(x)-g(y))}{|x-y|^{1+\alpha}}\d x\d y,
		\end{aligned}\right.
	\end{align}
	where $C(\alpha)$ is a positive constant. The case $\alpha=2$, corresponding to Brownian motion, may be included if necessary.
	The main result of the paper is stated as follows.
	
	\begin{theorem}\label{mainthm}
		The 1-dim symmetric $\alpha$-stable process has proper regular subspaces if and only if $\alpha\in[1,2]$.
	\end{theorem}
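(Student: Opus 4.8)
My plan is first to strip the problem down to a statement purely about the domain. By the Beurling--Deny representation, any regular subspace $(\mathcal{E}^2,\mathcal{F}^2)$ of $(\mathcal{E},H^{\alpha/2})$ decomposes into a strongly local part, a jump part and a killing part. Testing $\mathcal{E}^2$ against pairs of functions in $\mathcal{F}^2\cap C_c$ with disjoint supports reads off the jump measure, and regularity makes $\mathcal{F}^2\cap C_c$ uniformly dense in $C_c$; combined with the inheritance result of \cite{li2015regular} this forces the jump measure of $\mathcal{F}^2$ to coincide with $J(\d x,\d y)=\tfrac{C(\alpha)}{2}|x-y|^{-1-\alpha}\,\d x\,\d y$ and the killing measure to vanish. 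Since $\mathcal{E}$ itself has no local part, comparing the two decompositions on $\mathcal{F}^2$ shows the local part of $\mathcal{E}^2$ is zero as well. Hence $\mathcal{E}^2$ and $\mathcal{E}$ have the identical expression \eqref{def}, and the entire question is whether $H^{\alpha/2}$ admits a proper, $\mathcal{E}_1$-closed, Markovian, regular subdomain.

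I would then reformulate properness through the $\mathcal{E}_1$-orthogonal complement: a closed subspace $\mathcal{F}^2$ is proper exactly when there is a nonzero $h\in H^{\alpha/2}$ with $\mathcal{E}_1(h,g)=0$ for all $g$ in a core of $\mathcal{F}^2$. Writing this out, $h$ must satisfy $\bigl(I+(-\Delta)^{\alpha/2}\bigr)h=\mu$ for a nonzero $\mu\in H^{-\alpha/2}$, and for $\mathcal{F}^2$ to be genuinely new the distribution $\mu$ must be singular, supported on a Lebesgue-small set that the core of $\mathcal{F}^2$ annihilates. This is where the threshold enters: a measure such as $\delta_0$ lies in $H^{-\alpha/2}$ iff $\alpha>1$, and more generally singular measures carried by thin sets enter $H^{-\alpha/2}$ precisely in the infinite-variation regime $\alpha\ge 1$. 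This is the analytic shadow of the sample-path dichotomy announced in the abstract.

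For existence when $\alpha\in(1,2)$ I would carry out an explicit Cantor-type construction mirroring \cite{fang2010regular}: build a singular structure from a fat Cantor set $F$ with positive Lebesgue measure, encode it by a scale function $s$ with $\d s\ll\d x$ and $s'=0$ or $1$ a.e., and let $\mathcal{F}^{(s)}$ be the $\mathcal{E}_1$-closure of the continuous members of $H^{\alpha/2}$ that are ``$s$-flat''. The verification splits into three parts: stability under normal contractions (Markovian property), uniform density in $C_c$ of its continuous members (regularity), and strict inclusion in $H^{\alpha/2}$ (properness). Properness is exactly the existence of the singular $h$ above, available because $\alpha>1$, and running the correspondence in both directions should yield the full characterization of regular subspaces by such scale functions. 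The critical value $\alpha=1$ I would treat separately by the trace method of \cite{li2019regular}: realize $H^{1/2}(\mathbb{R})$ as the boundary trace of reflecting Brownian motion on the half-plane, take one of its proper regular subspaces (these exist since planar Brownian motion admits them), and verify that tracing preserves properness; the endpoint $\alpha=2$ is \cite{fang2010regular}.

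For non-existence when $\alpha\in(0,1)$ I would prove a rigidity statement: every regular subspace equals $H^{\alpha/2}$. Here finite variation, $\int_{|z|\le1}|z|\,\nu(\d z)<\infty$, makes indicators of intervals members of $H^{\alpha/2}$ and, dually, prevents any nonzero singular $\mu$ from belonging to $H^{-\alpha/2}$ (already $\delta_0\notin H^{-\alpha/2}$). Thus the $\mathcal{E}_1$-orthogonal complement of any regular subspace is trivial, forcing $\mathcal{F}^2=H^{\alpha/2}$; concretely one shows that a uniformly dense Markovian core is automatically $\mathcal{E}_1$-dense, by controlling $\mathcal{E}$ through the sup-norm and the variation of the approximants. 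The main obstacle I anticipate is the borderline behaviour at $\alpha=1$: point masses just fail to lie in $H^{-1/2}$, so the explicit singular construction degenerates and one is pushed into the separate, less transparent trace argument; symmetrically, the rigidity proof for $\alpha<1$ must quantify exactly how finite variation excludes \emph{all} singular test measures, not merely point masses.
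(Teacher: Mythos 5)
Your high-level architecture (dichotomy at $\alpha=1$, an orthogonality/duality criterion for properness, an explicit singular scale function for existence, rigidity for $\alpha<1$) matches the paper's, but three of your key steps have genuine gaps. First, the existence construction for $\alpha\in(1,2)$: a fat Cantor set with positive Lebesgue measure is not enough. The paper shows (Theorem \ref{char2}) that properness of $\overline{\FF_s}$ is governed by the capacity $\Cap^{\alpha^*/2}$ with $\alpha^*=2-\alpha$, not by Lebesgue measure: one needs an open set $G$ carrying $\d s$ with $\Cap^{\alpha^*/2}(G\cap U)<\Cap^{\alpha^*/2}(U)$. Since $\Cap^{\alpha^*/2}$ of an interval of length $r$ scales like $r^{\alpha-1}\gg r$, the complement of a generic fat Cantor set (removed intervals of total length $<2$ but with $\sum_i r_i^{\alpha-1}$ large or infinite) will in general fail this, and then $\overline{\FF_s}=H^{\alpha/2}$. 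The paper must calibrate the removed intervals so that $\sum_i r_i^{\alpha-1}$ is smaller than $\Cap^{\alpha^*/2}(I_1)$ (logarithmic capacity when $\alpha=1$); this calibration is the crux of the existence proof and is absent from your sketch. Second, your duality is set in the wrong space: the paper pairs $H^{\alpha/2}$ with $H^{(2-\alpha)/2}$ via the distributional derivative $D$, which sends $f\circ s$ to the measure $\d(f\circ s)$ and turns properness into a capacity statement about the support of $\d s$. Your claim that for $\alpha<1$ finite variation ``prevents any nonzero singular $\mu$ from belonging to $H^{-\alpha/2}$'' is false: $\mu\in H^{-\alpha/2}$ essentially means finite $(1-\alpha)$-Riesz energy, and singular measures on Cantor sets of Hausdorff dimension exceeding $1-\alpha$ satisfy this for every $\alpha\in(0,1)$. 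The correct obstruction in the dual picture is that every function in $H^{(2-\alpha)/2}$ is continuous when $\alpha<1$, so nothing nonzero can vanish q.e.\ on a dense set.

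Third, the rigidity argument for $\alpha\in(0,1)$ as you state it does not close: ``controlling $\EE$ through the sup-norm and the variation of the approximants'' presumes that a function $g\in\GG\cap C_c$ uniformly close to $1_{[a,b]}$ has controlled total variation, which it need not (it can oscillate wildly). The paper's fix is its central technical device: the erased-function lemma (Lemma \ref{normcontract}, resting on the Hardy-type inequality of Proposition \ref{hardy}) shows that the monotone Skorokhod-type modification $g^*$ of $g$ still lies in $\GG$ with $\EE_1$-norm controlled by that of $g$; these modifications have total variation exactly $2$, whence the uniform Fourier bound $|\hat f(\xi)|\le 2|\xi|^{-1}$ and bounded energy, and one then passes to $1_{[a,b]}$ and to the dense class of step functions. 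Without that lemma the membership $g^*\in\GG$ is unjustified. Your treatment of $\alpha=1$ by the trace method of \cite{li2019regular} is a legitimate alternative to the paper's unified capacity construction, provided one verifies that the trace of a proper regular subspace of planar reflecting Brownian motion is still proper, which is exactly the content of that reference.
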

	Moreover, we give a characterization of the regular subspaces for $1<\alpha\le 2$. Recall that a scale function refers to a continuous and strictly increasing function on $\IR$.  For any scale function $s$, we define
	\begin{align*}
		\mathbf{L}_s&:=\big\{f:s(\IR)\rightarrow \IR:f\text{ is Lipschitz and }\supp(f)\subset s(\IR)\big\},\\
		\mathcal{F}_s&:=\big\{f\circ s:f\in \bL_s\big\}.
	\end{align*}
	We will mainly focus on the scale function of the following type.
	Denote by $H^{\alpha/2}_{\loc}$ the space of functions locally in $H^{\alpha/2}$, i.e. $f\in H^{\alpha/2}_{\loc}$ if for any compact set $K$, there exists $g\in H^{\alpha/2}$ with $g=f$ a.e. on $K$. 
	We set
	\[\bS^{\alpha/2}_{\loc}:=\big\{s\in H^{\alpha/2}_{\loc}:s\text{ is a scale function}\big\}.\]

	\begin{theorem}\label{char1}
		Assume $1 <\alpha\le 2$ and $s\in \mathbf{S}^{\alpha/2}_{\loc}$. Then we have $\cF_s\subset H^{\alpha/2}$ and $\overline{\cF_s}=\overline{\cF_s}^{\EE^{\alpha/2}_1}$ is a regular subspace of $H^{\alpha/2}$. Conversely,  any regular subspace of $H^{\alpha/2}$ equals $\overline{\cF_s}$ for some $s\in\mathbf{S}^{\alpha/2}_{\loc}$.
	\end{theorem}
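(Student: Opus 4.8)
\emph{Overview.} The plan is to treat the two directions separately: first that $s\mapsto\overline{\cF_s}$ produces a regular subspace, then that every regular subspace arises this way. Throughout I use that $\alpha>1$ gives the (local) Sobolev embedding $H^{\alpha/2}\hookrightarrow C(\IR)$, so that all functions in play are continuous; this is exactly where the hypothesis $1<\alpha\le2$ enters. For the forward direction, fix $f\in\bL_s$ with $\supp f=[a,b]\subset s(\IR)$ and set $I=s^{-1}([a,b])=[c,d]$, so $s(c)=a$ and $s(d)=b$ by continuity and strict monotonicity. Since $s\in H^{\alpha/2}_{\loc}$, choose $g\in H^{\alpha/2}$ agreeing with $s$ on a neighborhood $J\supset I$, and let $\Psi$ be the extension of $f$ by $0$ to $\IR$, which is bounded and Lipschitz with constant $L$ because $f(a)=f(b)=0$. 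The pointwise bound $|\Psi(g(x))-\Psi(g(y))|\le L|g(x)-g(y)|$ yields $\EE^{\alpha/2}(\Psi\circ g,\Psi\circ g)\le L^2\EE^{\alpha/2}(g,g)<\infty$, and $\Psi\circ g$ is bounded. On $J$ one has $\Psi\circ g=f\circ s$ (using $g=s$ there and $\Psi=0$ off $[a,b]$), so for a cutoff $\chi\in C_c^\infty(\IR)$ with $\chi=1$ on $I$ and $\supp\chi\subset J$ we get $f\circ s=\chi\,(\Psi\circ g)$. Since cutting off a bounded finite-energy function by a fixed $C_c^\infty$ function lands in $H^{\alpha/2}$ (standard localized product estimate), this gives $\cF_s\subset H^{\alpha/2}$.

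\emph{The subspace is regular.} The space $\overline{\cF_s}$ is $\EE^{\alpha/2}_1$-closed, being a closed subspace of the complete space $H^{\alpha/2}$, and it carries the restriction of $\EE^{\alpha/2}$, so it is a closed symmetric form; the identity $\overline{\cF_s}=\overline{\cF_s}^{\EE^{\alpha/2}_1}$ is then just the definition of the closure. It is Markovian because $\cF_s$ is: for a normal contraction $T$ (so $T(0)=0$ and $T$ is $1$-Lipschitz) and $u=f\circ s$, one has $Tu=(T\circ f)\circ s$ with $T\circ f\in\bL_s$ (Lipschitz constant $\le L$ and support inside $\supp f$, since $T(0)=0$), while $\EE^{\alpha/2}(Tu,Tu)\le\EE^{\alpha/2}(u,u)$ follows from the Gagliardo form; the Markovian property then passes to the $\EE^{\alpha/2}_1$-closure. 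For regularity, note $\cF_s\subset C_c(\IR)$ and that $\cF_s$ is uniformly dense in $C_c(\IR)$: given $h\in C_c(\IR)$, approximate $h\circ s^{-1}$ uniformly on $s(\IR)$ by compactly supported Lipschitz functions $f_n$, so that $f_n\circ s\to h$ uniformly. Hence $\cF_s\subset\overline{\cF_s}\cap C_c$ is uniformly dense in $C_c$ and $\EE^{\alpha/2}_1$-dense in $\overline{\cF_s}$ (and dense in $L^2$), which are the two regularity conditions. Thus $\overline{\cF_s}$ is a regular subspace.

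\emph{The converse.} Let $(\EE^{\alpha/2},\FF^2)$ be a regular subspace. By the inheritance of the jumping and killing measures under taking regular subspaces (cf. \cite{li2015regular}) together with the Beurling--Deny decomposition, its form has the same pure-jump kernel $|x-y|^{-1-\alpha}$ and no killing; since its total value equals $\EE^{\alpha/2}$, the strongly local part vanishes. Consequently $\FF^2$ differs from $H^{\alpha/2}$ only through its smaller domain, and the whole content is to recover a monotone coordinate encoding that domain. The plan is to construct a strictly increasing $s$ with $\cF_s\subset\FF^2$ (hence $\overline{\cF_s}\subset\FF^2$) such that every $u\in\FF^2\cap C_c$ is $s$-Lipschitz, $|u(x)-u(y)|\le L_u|s(x)-s(y)|$ (hence $\FF^2\subset\overline{\cF_s}$). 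Using regularity I would produce, on each bounded interval, strictly increasing elements of $\FF^2\cap C_c$ and patch, normalize, and pass to a limit to obtain a global strictly increasing $s$; membership $s\in\bS^{\alpha/2}_{\loc}$ then follows because locally $s$ coincides with an element of $\FF^2\subset H^{\alpha/2}$. Verifying the two inclusions and taking $\EE^{\alpha/2}_1$-closures gives $\FF^2=\overline{\cF_s}$.

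\emph{Main obstacle.} The delicate point is not the construction of a candidate $s$ but the rigidity statement that \emph{every} $u\in\FF^2\cap C_c$ is Lipschitz with respect to the \emph{single} scale $s$. In the Brownian case the subspace consists of functions of bounded variation and the argument is driven by $\d u=(\d u/\d s)\,\d s$; here functions in $H^{\alpha/2}$ with $\alpha<2$ need not be of bounded variation, so no derivative $\d u/\d s$ is available a priori, and the monotone-coordinate structure cannot be read off from a measure decomposition. I expect the rigidity instead to come from combining the fixed pure-jump kernel, the Markovian (normal-contraction) lattice structure of the regular form, and the continuity supplied by $\alpha>1$, to rule out two ``independent'' monotone directions in the domain and thereby force $\FF^2$ to be generated by one scale. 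This is the step I anticipate to be hardest, and the one in which $1<\alpha\le2$ is essential.
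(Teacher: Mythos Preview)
Your forward direction is correct and essentially matches the paper's argument (which is terse: $\cF_s\subset H^{\alpha/2}$ from $s\in H^{\alpha/2}_{\loc}$ plus Lipschitz contraction, regularity from $\cF_s$ being a lattice separating points and hence uniformly dense in $C_c$).

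The converse, however, is only a sketch with a real gap. First, your target statement ``every $u\in\FF^2\cap C_c$ is $s$-Lipschitz'' is too strong: functions in $\overline{\cF_s}\cap C_c$ need not be $s$-Lipschitz (only those in $\cF_s$ are), so you should only aim for this on a dense subfamily. More importantly, you give no mechanism for producing such an $s$. The paper's key device here is a \emph{ladder decomposition} (its Lemma preceding this theorem): any nonnegative $f\in C_c\cap\GG$ is written as an $\EE_1$-convergent sum of ``ladder-like'' functions (increase to a max, then decrease), each of which remains in $\GG$ by an erased-function contraction lemma. Taking a countable dense set in $\GG$, collecting the resulting ladder pieces $h_n$, and splitting each into its monotone ``left arm'' $h_n^l$ and reflected ``right arm'' $h_n^r$, the paper defines $s$ as an explicit weighted series $\sum 2^{-n}(h_n^l+h_n^r)/(\|h_n^l\|_{\EE}+\|h_n^r\|_{\EE}+\|h_n\|_\infty)$. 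This construction makes $\GG\subset\overline{\cF_s}$ automatic, since each $h_n$ satisfies $|h_n(x)-h_n(y)|\le C|s(x)-s(y)|$ by design. The inclusion you regard as the easier one, $\cF_s\subset\GG$, is in fact the delicate step: $s$ is an \emph{infinite} sum of monotone pieces whose ``arms'' are not themselves in $H^{\alpha/2}$ (they do not vanish at infinity), so one cannot simply say $s$ is locally an element of $\GG$. The paper approximates $f\circ s$ by $f\circ s_k$ with partial sums $s_k$, and then uses an ad hoc construction attaching a compatible auxiliary ladder $p_n\in\GG$ far to the right so that the resulting compactly supported functions $A^l_n,A^r_n$ lie in $\GG$; this is where the Markovian lattice operations and regularity are actually exploited. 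Your proposal contains none of this machinery, and the vague appeal to ``rule out two independent monotone directions'' does not substitute for it.
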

	
	Next, we further give several necessary and sufficient conditions on $s$ for $\overline{\cF_s}$ to be a proper subspace. For $\alpha=2$, we can readily tell whether it's proper by considering the Lebesgue measure of $\{\d s/\d x=0\}$, as is done by \cite{fang2006algebraic}.
	For $\alpha\in (0,2)$, set $\alpha^*=2-\alpha$. Denote by $\Cap^{\alpha^*/2}$ the capacity relative to $(H^{\alpha^*/2},\EE^{\alpha^*/2})$. For a Borel set $E$, we say $\Cap^{\alpha^*/2}$ is concentracted on $E$ if for any open set $U\subset \IR$, $\Cap^{\alpha^*/2}(E\cap U)=\Cap^{\alpha^*/2}(U)$. We use $\d s$ to represent the Radon measure induced by a scale function $s$ and write $\langle f,\d s\rangle=\int_{\IR}f\,\d s$ for a Borel function $f$. For $f\in H^{\alpha^*/2}$, $\widetilde{f}$ represents its quasi continuous modification.
	\begin{theorem}\label{char2}
		For $1< \alpha< 2$ and $s\in \mathbf{S}^{\alpha/2}_{\loc}$, $\d s$ is a smooth measure and
		the following are equivalent:
		\begin{enumerate}[(a)]
			\item\label{proper1} $\overline{\cF_s}$ is proper;
			\item\label{proper2} $\Cap^{\alpha^*/2}$ is not concentrated on the quasi support of $\d s$;
			\item\label{proper3} 
			there exists an open set $E$ such that $\d s$ is concentrated on $E$ (i.e. $\langle 1_{E^c},\d s\rangle=0$) and $\Cap^{\alpha^*/2}$ is not concentrated on $E$;
			\item\label{proper4} 
			there exists a nonzero $f\in H^{\alpha^*/2}$ such that $\langle \widetilde{f},\d s\rangle=0$,
		\end{enumerate}
		where the quasi notions `smooth', `quasi support', `q.e.', and `$\widetilde{f}$' are all relative to $(H^{\alpha^*/2},\EE^{\alpha^*/2})$ and a function is nonzero if it is not identically zero.
	\end{theorem}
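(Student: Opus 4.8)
The engine of the whole argument is the algebraic identity $\alpha+\alpha^*=2$, which lets me rewrite the energy of $H^{\alpha/2}$ as a Riesz pairing of derivatives dual to $H^{\alpha^*/2}$. On the Fourier side $|\xi|^\alpha=|\xi|^2|\xi|^{-\alpha^*}$, so for $u,v\in H^{\alpha/2}$ one has
\[
\EE^{\alpha/2}(u,v)=\int_\IR \widehat{u'}(\xi)\,\overline{\widehat{v'}(\xi)}\,|\xi|^{-\alpha^*}\,\d\xi,
\]
and the half-order transform $h\mapsto g:=I_{\alpha^*}h'$, with $\widehat g(\xi)=i\,\mathrm{sgn}(\xi)\,|\xi|^{\alpha-1}\widehat h(\xi)$, maps $H^{\alpha/2}$ into $H^{\alpha^*/2}$ since $\int_\IR|\widehat g|^2|\xi|^{\alpha^*}\,\d\xi=\EE^{\alpha/2}(h,h)$. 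First I would use this duality to show that $\d s$ is smooth with respect to $(H^{\alpha^*/2},\EE^{\alpha^*/2})$: pairing $\d s$ against a function of $H^{\alpha^*/2}$ amounts, through the transform, to pairing $s\in H^{\alpha/2}_{\loc}$, so the local $H^{\alpha/2}$-membership of $s$ yields that $\d s$ charges no set of zero $\Cap^{\alpha^*/2}$ and admits a generating nest, which is exactly smoothness.

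Next I would prove (\ref{proper1})$\Leftrightarrow$(\ref{proper2}) by computing the orthogonal complement of $\cF_s$ in $(H^{\alpha/2},\EE^{\alpha/2}_1)$. Writing $u=\phi\circ s$ with $\phi\in\bL_s$, the derivative measure is $u'\,\d x=(\phi'\circ s)\,\d s$, and since $s$ is a homeomorphism the functions $\phi'\circ s$ range densely over all bounded compactly supported $\psi$ with $\langle\psi,\d s\rangle=0$. Substituting into the displayed identity (the $L^2$ cross term being handled by a separate reduction) gives, for $h$ orthogonal to $\cF_s$,
\[
\int_\IR g\,\psi\,\d s=0\qquad\text{for all such }\psi,
\]
so that $g=I_{\alpha^*}h'$ must be $\d s$-a.e. equal to a constant. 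Hence $\overline{\cF_s}$ is proper precisely when there is a nonzero $g\in H^{\alpha^*/2}$ with $\widetilde g$ constant $\d s$-a.e.; because membership in $L^2$ rules out a nonzero constant on a set of infinite capacity, this occurs exactly when the quasi support $F$ of $\d s$ has a non-polar complement, i.e. when $\Cap^{\alpha^*/2}$ is not concentrated on $F$.

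The remaining equivalences (\ref{proper2})$\Leftrightarrow$(\ref{proper3})$\Leftrightarrow$(\ref{proper4}) I would treat by the potential theory of the smooth measure $\d s$ relative to $(H^{\alpha^*/2},\EE^{\alpha^*/2})$. For (\ref{proper2})$\Leftrightarrow$(\ref{proper3}) I would approximate the quasi-closed support $F$ from outside by open sets carrying $\d s$, using that $\d s$ is concentrated on $F$ up to a polar set. For the link with (\ref{proper4}), the witness $g$ produced above vanishes q.e. on $F$, hence satisfies $\langle\widetilde g,\d s\rangle=0$, giving (\ref{proper2})$\Rightarrow$(\ref{proper4}); conversely a nonzero $f$ with $\langle\widetilde f,\d s\rangle=0$ must have $\widetilde f$ vanishing on a non-polar portion of $F^c$, which returns (\ref{proper2}). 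Throughout I would keep careful track of quasi-continuous modifications and use that points are polar when $\alpha^*\in(0,1)$.

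I expect the main obstacle to be twofold. First, establishing smoothness of $\d s$ and, more delicately, making both the pairing $\langle\widetilde f,\d s\rangle$ and the identity for $\EE^{\alpha/2}$ rigorous when $\d s$ has infinite total energy: the formal integration by parts must be justified along a nest, and the transform $h\mapsto I_{\alpha^*}h'$ must be controlled at low frequencies, where the transience of the $\alpha^*$-stable process (equivalently $\alpha^*<1$) interacts with the constant appearing in the orthogonality condition. Second, the passage between ``$\widetilde g$ constant $\d s$-a.e.'' and the clean annihilation statement in (\ref{proper4}) demands care precisely because the complementary process is transient; this is where the hypothesis $1<\alpha<2$ (so $0<\alpha^*<1$) is genuinely used, and it is the step I would expect to require the heaviest bookkeeping with capacities and quasi supports.
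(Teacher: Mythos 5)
Your overall strategy coincides with the paper's: exploit $\alpha+\alpha^*=2$ to convert $\EE^{\alpha/2}(h,\phi\circ s)$ into a pairing of a transform of $h$ against the measure $\d(\phi\circ s)=(\phi'\circ s)\,\d s$ (the paper does this via the distributional derivative $D$, an isometry of $\mathring{H}^{\alpha/2}_e$ into $\big(H^{\alpha^*/2}_e\big)^*$, which is your $h\mapsto I_{\alpha^*}h'$ in disguise), then read properness off the annihilator of $\{\d(f\circ s):f\circ s\in\cF_s\}$ and reduce (b)--(d) to potential theory of $\d s$. However, two steps you treat as routine are exactly where the paper has to do real work, and as written your argument is incomplete there. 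First, the passage from ``$\overline{\cF_s}$ is proper in $(H^{\alpha/2},\EE_1)$'' to ``there is a nonzero $h$ with $\EE^{\alpha/2}(h,\phi\circ s)=0$ for all $\phi\circ s$'' is not a matter of handling an $L^2$ cross term: one must show that the image of the extended space of $\overline{\cF_s}$ in the homogeneous space $\mathring{H}^{\alpha/2}_e$ is a \emph{proper closed} subspace. This is the paper's Lemma \ref{extended-close}, whose proof splits into the recurrent and transient cases and, in the transient case, rules out $\mathring{\GG}=\mathring{H}^{\alpha/2}_e$ by a separate argument on the constants $c(f)$; nothing in your sketch supplies this. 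Relatedly, your orthogonality computation only yields that $\widetilde{g}$ is \emph{constant} $\d s$-a.e.; killing a nonzero constant is not automatic (the quasi support of $\d s$ need not have infinite Lebesgue measure, since $s$ may be bounded and $\d s$ singular), and the paper disposes of it by truncation and localization ($q=(h\wedge 1)\phi$) together with Lemma \ref{qelem}, not by an ``$L^2$ versus infinite capacity'' argument.

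Second, you identify ``$\Cap^{\alpha^*/2}$ not concentrated on the quasi support $F$'' with ``$F^c$ non-polar'' and then with the existence of a nonzero $g\in H^{\alpha^*/2}$ vanishing $\d s$-a.e. Under the definition actually used, ``not concentrated on $F$'' means $\Cap^{\alpha^*/2}(F\cap U)<\Cap^{\alpha^*/2}(U)$ for some open $U$, and its equivalence with the existence of a nonzero $f\in H^{\alpha^*/2}$ with $\widetilde{f}=0$ q.e.\ on $F$ is precisely the Adams--Hedberg theorem (the paper's Theorem \ref{adams}), which the paper reproves via equilibrium potentials in one direction and via regular points of the $\alpha^*$-stable process in the other. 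Your one-line justification proves neither implication, and the substitute criterion ``non-polar complement'' is not the notion appearing in the statement; reconciling the two for quasi-closed sets again requires this theorem. So the skeleton is the right one, but the two load-bearing lemmas --- the closedness and properness of the homogeneous image of $\overline{\cF_s}$, and the spectral-synthesis characterization of ``not concentrated'' --- are missing rather than merely postponed, and without them the chain (a)$\Leftrightarrow$(b)$\Leftrightarrow$(c)$\Leftrightarrow$(d) does not close.
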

	We point out that except for the second assertion in Theorem \ref{char1}, all other conclusions in Theorem \ref{char1} and \ref{char2} also hold for $\alpha=1$. 
	

	The paper is organized as follows. In Section \ref{presection}, we prepare a lemma for the later proof. Section \ref{case1}$\sim$\ref{ifpart} is devoted to the proof of Theorem \ref{mainthm}$\sim$\ref{char2}. In Section \ref{generalsec}, we extend results to general symmetric L\'evy processes.
	
	The proofs in the paper use quite different methods from the previous ones about the Brownian motion. It may shed a light on the existence of regular subspaces of general pure jump processes, though the methods are highly analytical.
	
	\subsection*{Notation}
	We will use the following notation throughout the paper. For $-\infty\le a<b\le \infty$, we denote by
	\begin{itemize}
		\item $C_c(a,b)$ the space of continuous functions on $(a,b)$ with compact support;
		\item $C_\infty(a,b)$ the space of continuous functions on $(a,b)$ which vanish at $a$ and $b$;
		\item $C^\infty_c(a,b)$ the space of smooth functions on $(a,b)$ with compact support.
	\end{itemize}
	In particular, `$(a,b)$' in the above notation is omitted when $(a,b)=\IR$.
	
	\section{A preliminary lemma}\label{presection}
	In this section, we give a preliminary lemma (Lemma \ref{normcontract}) which will be frequently used in the sequel. First, we present a simple generalization of a
	result in Sobolev spaces. 
	\begin{proposition}\label{hardy}
		Let $\alpha\in (0,1)\cup(1,2]$ and $(a,b)$ be a finite open interval. For $f\in L^2(a,b)$ (if $\alpha\in (0,1)$) or $f\in C_\infty(a ,b)$ (if $\alpha\in(1,2)$),
		\begin{align}\label{hardy1}	\begin{aligned}
				\int_{(a,b)\times\mathbb{R}\backslash (a,b) } \dfrac{f(x)^2}{|x-y|^{1+\alpha}}\d x \d y
				&=\frac{1}{\alpha}\int_{(a,b)} f(x)^2 \left(\dfrac{1}{|x-a|^{\alpha}}+\dfrac{1}{|x-b|^{\alpha}}\right)\d x\\
				&\leq c_\alpha \left(\int_{(a,b)^2} \dfrac{(f(x)-f(y))^2}{|x-y|^{1+\alpha}}\d x\d y+\norm{f}^2_{L^2(a,b)}\right),
			\end{aligned}
		\end{align}
		where $c_\alpha$ is a constant depending only on 
		$\alpha$.
	\end{proposition}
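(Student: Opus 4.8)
The plan is to treat the two assertions separately: the stated identity is a direct computation, while the inequality is a fractional Hardy inequality on the interval whose proof must split according to whether $\alpha<1$ or $\alpha>1$.

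First I would prove the equality. For fixed $x\in(a,b)$, Fubini reduces the left-hand side to the one-dimensional integral $\int_{\IR\setminus(a,b)}|x-y|^{-1-\alpha}\d y$, and since $x$ lies strictly between $a$ and $b$ this integral is elementary:
\begin{equation*}
\int_{-\infty}^{a}(x-y)^{-1-\alpha}\d y+\int_{b}^{\infty}(y-x)^{-1-\alpha}\d y=\frac{1}{\alpha}\Big(|x-a|^{-\alpha}+|x-b|^{-\alpha}\Big).
\end{equation*}
Multiplying by $f(x)^2$ and integrating over $(a,b)$ gives the claimed identity. It is worth recording the conceptual content: writing $F=f\cdot 1_{(a,b)}$ for the extension of $f$ by zero, the full-space energy $\int_{\IR^2}(F(x)-F(y))^2|x-y|^{-1-\alpha}\d x\d y$ decomposes as the interior energy $G(f):=\int_{(a,b)^2}(f(x)-f(y))^2|x-y|^{-1-\alpha}\d x\d y$ plus twice the left-hand side above, so the inequality is equivalent to the boundedness of extension-by-zero from $(a,b)$ into $H^{\alpha/2}(\IR)$, with $\norm{f}_{L^2}^2$ as a lower-order correction.

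For the inequality itself, after absorbing $\tfrac1\alpha$ into $c_\alpha$, I would use the symmetry between the two endpoints to handle only the endpoint $a$, translate so that $a=0$ (set $L=b-a$), and split $\int_0^{L}f(x)^2\,x^{-\alpha}\d x$ into a far part over $(L/2,L)$, where the weight is at most $(L/2)^{-\alpha}$ and the contribution is absorbed into $\norm{f}^2_{L^2}$, and a near part over $(0,L/2)$, which carries the genuine singularity. For the near part I would take a dyadic decomposition $I_k$ of $(0,L/2)$ with $|x|$ of order $2^k$ on $I_k$, and on each $I_k$ write $\int_{I_k}f^2=\int_{I_k}(f-a_k)^2+|I_k|\,a_k^2$ with $a_k$ the mean of $f$ on $I_k$. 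The oscillation term is harmless: since $|x-y|\le|I_k|$ on $I_k\times I_k$ one has $\int_{I_k}(f-a_k)^2\le \tfrac12|I_k|^{\alpha}\iint_{I_k\times I_k}(f(x)-f(y))^2|x-y|^{-1-\alpha}$, and after multiplying by the weight of order $2^{-k\alpha}$ and summing these pieces add up to at most $C\,G(f)$.

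The remaining and decisive term is $\sum_k 2^{k(1-\alpha)}a_k^2$, which I would control by a discrete Hardy inequality; this is the main obstacle. The increments satisfy $(a_k-a_{k+1})^2\le C\,2^{k(\alpha-1)}E_k$ with $E_k=\iint_{I_k\times I_{k+1}}(f(x)-f(y))^2|x-y|^{-1-\alpha}$ (Jensen together with $|x-y|$ of order $2^k$ across adjacent intervals), so that $\sum_k 2^{k(1-\alpha)}(a_k-a_{k+1})^2\le C\sum_k E_k\le C\,G(f)$; it then remains to dominate $\sum_k 2^{k(1-\alpha)}a_k^2$ by these increments plus a boundary contribution. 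Expanding $a_k^2\le(1+\varepsilon)a_{k+1}^2+(1+\varepsilon^{-1})(a_k-a_{k+1})^2$ and reindexing produces the factor $(1+\varepsilon)2^{-(1-\alpha)}$, which can be made $<1$ precisely when $2^{1-\alpha}>1$, i.e. when $\alpha<1$; in that range the series telescopes toward the large (bounded) scales, the top scale is estimated by $\norm{f}^2_{L^2}$, and no boundary condition is needed, consistent with the hypothesis $f\in L^2(a,b)$. When $\alpha>1$ the ratio runs the other way and the same summation must instead be run toward the singular endpoint, where it converges only because $a_k\to f(0)=0$ as $k\to-\infty$; this is exactly why $f\in C_\infty(a,b)$ is imposed, and indeed why the left-hand side is finite at all. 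The borderline $\alpha=1$ gives geometric ratio $1$ and no contraction in either direction, matching its exclusion from the statement, while $\alpha=2$ reduces to the classical Hardy inequality $\int_0^L f^2 x^{-2}\le 4\int_0^L (f')^2$ for $f$ vanishing at the endpoints. I expect the careful bookkeeping of this discrete Hardy step — the sharp geometric constant and the absorption that produces either the $L^2$ term ($\alpha<1$) or exploits the boundary vanishing ($\alpha>1$) — to be the crux of the argument.
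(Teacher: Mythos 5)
Your proposal is correct in outline but takes a genuinely different route from the paper. The paper does not prove the inequality from scratch: it quotes the fractional Hardy inequality for $f\in C^\infty_c(a,b)$ from \cite[Lemma 3.32]{mclean2000strongly} and then removes the smoothness assumption by density, using that $C^\infty_c(a,b)$ is $\EE_1$-dense in $H^{\alpha/2}(a,b)$ when $\alpha<1$ and that its $\EE_1$-closure equals $H^{\alpha/2}(a,b)\cap C_\infty(a,b)$ when $\alpha>1$ (\cite[Theorem 3.40]{mclean2000strongly}) --- the latter identification being exactly where the hypothesis $f\in C_\infty(a,b)$ enters. Your dyadic decomposition plus discrete Hardy argument proves the core inequality directly and is self-contained; it also makes transparent why $\alpha=1$ is excluded and why the two ranges require different hypotheses, which the citation-based proof hides. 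Two caveats on the crux step you flag. First, in the range $\alpha>1$ the reversed $(1+\varepsilon)$-absorption does \emph{not} work as literally described: the boundary term it leaves at the singular end is $2^{k_1(1-\alpha)}a_{k_1}^2$ with $k_1\to-\infty$, and continuity with $f(a)=0$ gives no decay rate for $a_{k_1}$; the correct implementation is to telescope $a_k=\sum_{j<k}(a_{j+1}-a_j)$ from the vanishing endpoint and apply Cauchy--Schwarz with weights $2^{j\theta}$ for $0<\theta<\alpha-1$, which needs only $a_j\to 0$. Your phrase ``converges only because $a_k\to f(0)=0$'' points at this mechanism, but it is the one place where a naive implementation fails. (In the range $\alpha<1$ one should likewise truncate to $k_1\le k\le k_0$ before absorbing, to ensure a priori finiteness.) Second, your aside that $\alpha=2$ reduces to the classical Hardy inequality $\int f^2x^{-2}\le 4\int (f')^2$ is off: for $\alpha=2$ the Gagliardo double integral on the right-hand side of \eqref{hardy1} is infinite for every nonconstant $f$, so the inequality is vacuous there (and your argument formally covers that case anyway).
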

	\begin{proof}
		It suffices to consider $f$ such that the right-hand side of \eqref{hardy1} is finite.
		In \cite[Lemma 3.32]{mclean2000strongly}, the analogous result for $f\in C^\infty_c(a,b)$ is given. For $\alpha\in (0,1)$, the generalization is immediate from the fact that $C^\infty_c(a,b)$ is dense in 
		\begin{align}\label{Hdef}
			H^{\alpha/2}(a,b)=\left\{f\in L^2(a,b):\int_{(a,b)^2} \dfrac{(f(x)-f(y))^2}{|x-y|^{1+\alpha}}\d x\d y<\infty\right\}.
		\end{align}
		For $\alpha\in (1,2)$, it is known (Cf. \cite[Theorem 3.40]{mclean2000strongly}) that 
		\[\overline{C^\infty_c(a,b)}^{\EE^{\alpha/2}_1}= H^{\alpha/2}(a,b)\cap C_\infty(a,b),\] which implies the conclusion. 
	\end{proof}
	For two non-negative functions $f$ and $g$ in $C_c$, we call $f$ an erased function of $g$ if $f\le g$ and $f$ is constant on each connected component of $\{f<g\}$. Intuitively, $f$ is obtained by `erasing' some upwards excursions of $g$. Henceforth we shall simply write $\EE=\EE^{\alpha/2}$ whenever the parameter `$\alpha$' is clear from the context.
	
	\begin{lemma}\label{normcontract}
		Let $\alpha\in (0,1)\cup(1,2]$, $\mathcal{G}$ be a regular subspace of $H^{\alpha/2}$, and $f$ and $g$ be non-negative functions in $C_c$.  If $g\in \mathcal{G}$ and $f$ is an erased function of $g$, then $f\in \mathcal{G}$ and $\norm{f}_{\mathcal{E}_1}\leq 6(1+c_\alpha) \norm{g}_{\mathcal{E}_1}$.
	\end{lemma}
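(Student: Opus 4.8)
The plan is to prove the two assertions separately: the quantitative bound $\|f\|_{\mathcal{E}_1}\le 6(1+c_\alpha)\|g\|_{\mathcal{E}_1}$ and the membership $f\in\mathcal{G}$. I would establish the energy bound first, since it is self-contained and relies only on Proposition \ref{hardy}, and then reuse it as the crucial uniform estimate in the membership part.

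For the energy bound, write $O:=\{f<g\}$, an open set, and decompose it into its connected components $O=\bigsqcup_i I_i$ with $I_i=(a_i,b_i)$. Since $f\le g$, $f\equiv c_i$ on $I_i$, and $f$ is continuous, one gets $g(a_i)=g(b_i)=c_i$ and $g>c_i$ on $I_i$; in particular $0\le g-c_i\le g$ on $I_i$, and $I_i$ is a connected component of $\{g>c_i\}$. Setting $F:=O^c$ (so $f=g$ on $F$) and $D(u):=\iint_{\mathbb{R}^2}(u(x)-u(y))^2|x-y|^{-1-\alpha}\,\d x\,\d y$, I would split $D(f)$ over $F\times F$, $F\times O$ and $O\times O$. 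On $F\times F$ the integrand is that of $g$, so this piece is $\le D(g)$; on $I_i\times I_i$ it vanishes. For the remaining pieces I would expand $f(x)-f(y)$ through the intermediate levels $c_i$ (e.g. $g(x)-c_i=(g(x)-g(y))+(g(y)-c_i)$ for $x\in F$, $y\in I_i$, and the analogous three-term splitting of $c_i-c_j$ for $x\in I_i$, $y\in I_j$) and apply $(p+q)^2\le 2p^2+2q^2$ and $(p+q+r)^2\le 3(p^2+q^2+r^2)$. This reduces everything to two kinds of terms: genuine $g$-differences integrated over subsets of $\mathbb{R}^2$, bounded by multiples of $D(g)$; and Hardy terms $A_i:=\iint_{I_i\times(\mathbb{R}\setminus I_i)}(g(y)-c_i)^2|x-y|^{-1-\alpha}\,\d x\,\d y$, to which Proposition \ref{hardy} applies with the function $(g-c_i)|_{I_i}\in C_\infty(a_i,b_i)$. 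Using $(g-c_i)(x)-(g-c_i)(y)=g(x)-g(y)$ on $I_i\times I_i$ and $\int_{I_i}(g-c_i)^2\le\int_{I_i}g^2$, summation over the disjoint $I_i$ gives $\sum_i A_i\le c_\alpha(D(g)+\|g\|_{L^2}^2)$. Collecting the pieces and using $\|f\|_{L^2}\le\|g\|_{L^2}$ yields, after keeping track of the constants, $\mathcal{E}_1(f)\le\big(6(1+c_\alpha)\big)^2\mathcal{E}_1(g)$.

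For the membership $f\in\mathcal{G}$ I would first erase a single excursion and then iterate. So suppose $O=I=(a,b)$ with level $c$. Writing $w:=(g-c)^+=g-g\wedge c$, the normal contraction $t\mapsto t\wedge c$ together with linearity gives $w\in\mathcal{G}\cap C_c$, $w\ge0$, and $f=g-w\mathbf{1}_I$; hence it suffices to prove $w\mathbf{1}_I\in\mathcal{G}$, i.e. that the single component $I$ of $\{w>0\}$ can be isolated inside $\mathcal{G}$. For $\epsilon>0$ put $w_\epsilon:=(w-\epsilon)^+=w-w\wedge\epsilon\in\mathcal{G}$. The set $\{w>\epsilon\}\cap I$ is compactly contained in $I$ (as $w\to0$ at $\partial I$), so regularity of $\mathcal{G}$ provides a cutoff $\phi\in\mathcal{G}\cap C_c(I)$ with $0\le\phi\le1$ and $\phi\equiv1$ on $\overline{\{w>\epsilon\}\cap I}$. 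With $M:=\|w\|_\infty$ the lattice operation yields $w_\epsilon\wedge(M\phi)=w_\epsilon\mathbf{1}_I\in\mathcal{G}$: on $\{w>\epsilon\}\cap I$ one has $M\phi=M\ge w_\epsilon$, while $\phi=0$ off $I$ kills every contribution outside $I$ (other components of $\{w>0\}$ lie outside $\overline I$). Now $w_\epsilon\mathbf{1}_I=(w\mathbf{1}_I-\epsilon)^+$ is a normal contraction of $w\mathbf{1}_I$, and the single-interval case of the estimate above shows $w\mathbf{1}_I\in H^{\alpha/2}$, so the Markovian property of $H^{\alpha/2}$ gives $\mathcal{E}_1(w_\epsilon\mathbf{1}_I)\le\mathcal{E}_1(w\mathbf{1}_I)<\infty$ uniformly in $\epsilon$. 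Since $w_\epsilon\mathbf{1}_I\to w\mathbf{1}_I$ in $L^2$ as $\epsilon\downarrow0$ and $(\mathcal{G},\mathcal{E}_1)$ is a Hilbert space, an $L^2$-limit of an $\mathcal{E}_1$-bounded sequence in $\mathcal{G}$ lies in $\mathcal{G}$ (weak compactness), so $w\mathbf{1}_I\in\mathcal{G}$ and $f\in\mathcal{G}$.

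For general $f$ the components $I_i$ may be infinite in number, so I would erase them one at a time: set $g_0=g$ and let $g_n$ be obtained from $g_{n-1}$ by erasing $I_n$ to its level $c_n$. Each step is the single-excursion case applied to $g_{n-1}\in\mathcal{G}$ (note $I_n$ is a component of $\{g_{n-1}>c_n\}$, since erasing the disjoint $I_1,\dots,I_{n-1}$ leaves $g$ unchanged on $\overline{I_n}$), so $g_n\in\mathcal{G}$; moreover $g_n$ is itself an erased function of $g$, whence the first part gives $\mathcal{E}_1(g_n)\le\big(6(1+c_\alpha)\big)^2\mathcal{E}_1(g)$ uniformly in $n$. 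As $0\le g_n\le g$ and $g_n\to f$ pointwise, $g_n\to f$ in $L^2$, and the uniform $\mathcal{E}_1$-bound again places the limit in $\mathcal{G}$. I expect the genuine difficulty to be concentrated in the single-excursion isolation: the naive cutoff $g\wedge(M\phi)$ with $\phi\equiv1$ on compacts exhausting $I$ has energy blowing up near $\partial I$, and the role of the truncation $w_\epsilon$ is precisely to create the spatial gap that makes a finite-energy cutoff available. Controlling the passages $\epsilon\downarrow0$ and $n\to\infty$ through the uniform Hardy-based bound, rather than through lattice estimates on the cutoffs themselves, is where the argument must be handled with care.
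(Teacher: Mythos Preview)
Your proof is correct and uses essentially the same ingredients as the paper: the same splitting of the double integral over $F\times F$, $I_i\times I_i$, $I_i\times F$ and $I_i\times I_j$, the same application of Proposition~\ref{hardy} to $(g-c_i)|_{I_i}$, and the same cut-off plus $\varepsilon$-truncation trick to isolate one excursion inside $\mathcal{G}$.

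There are two organizational differences worth recording. For the energy bound, the paper first passes to the finite truncation $f_n$ (erasing only $I_1,\dots,I_n$), proves the bound for $f_n$, and then takes a limit via Ces\`aro means; you instead apply Hardy interval by interval to the full countable family and sum, which is slightly more direct. For the membership $f\in\mathcal{G}$, the paper works with $h=g-f$ globally: it observes that $h_\varepsilon$ is supported in finitely many $I_i$ by uniform continuity, so that a single cut-off argument handles all surviving excursions at once, and then lets $\varepsilon\downarrow 0$ using $h_\varepsilon\to h$ in $\mathcal{E}_1$. Your route isolates one interval first and then iterates, controlling the limit $n\to\infty$ through the uniform Hardy bound and weak compactness. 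The paper's order avoids the sequential iteration entirely; your order makes the single-excursion mechanism more explicit. Both are valid and lead to the same constant.
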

	\begin{proof}
		The proof is divided into two steps. 
		We first show $f\in H^{\alpha/2}$ and $\norm{f}_{\EE_1}\le 6(1+c_\alpha)\norm{g}_{\EE_1}$, and then prove $f\in \mathcal{G}$.
		
		\textbf{Step 1}.
		Denote $\{f<g\}=\bigcup_{i\ge 1}(a_i,b_i)$, where $(a_i,b_i)$ are disjoint intervals. Here for simplicity, we also see $\emptyset$ as an interval, which ensures that the right-hand side can always be taken as a countable union. For $n\ge 1$, let
		$$
		f_n(x)=\begin{cases}
			f(x),\ & x\in \bigcup_{1\leq i\leq n}(a_i,b_i) ;\\
			g(x),\ & x\in \mathbb{R}\backslash\bigcup_{0\leq i\leq n}(a_i,b_i).
		\end{cases}
		$$
		It suffices to prove $\norm{f_n}_{\EE_1}\le 6(1+c_\alpha)\norm{g}_{\EE_1}$. In fact, once it is proved, we have the $\EE_1$-norms of $f_n$ are uniformly bounded.  It follows that the Ces\`aro means of a subsequence of $f_n$ are $\EE_1$-convergent. On the other hand, $f_n$ converges uniformly to $f$. Thus, $f\in H^{\alpha/2}$ and $\norm{f}_{\EE_1}\le 6(1+c_\alpha)\norm{g}_{\EE_1}$.
		
		Set $D=\bigcup_{1\leq i\leq n}(a_i,b_i)$. We will divide the domain of the integration in \eqref{def} and compare the integrations associated with $f_n$ and $g$ term by term.  
		\begin{enumerate}[(1)]
			\item\label{integration1} Since $f_n$ is constant on each $(a_i,b_i)$, $1\le i\le n$, we have
			\[\int_{\bigcup_{1\leq i\leq n}(a_i,b_i)^2}\dfrac{(f_n(x)-f_n(y))^2}{|x-y|^{1+\alpha}}\d x\d y=0.\]
			\item\label{integration2} Since $f_n=g$ outside $D$, it is plain that
			\[\int_{(D^c)^2}\dfrac{(f_n(x)-f_n(y))^2}{|x-y|^{1+\alpha}}\d x\d y=\int_{(D^c)^2}\dfrac{(g(x)-g(y))^2}{|x-y|^{1+\alpha}}\d x\d y.\]
			\item\label{integration3} For $1\le i\le n$,  
			\begin{align*}
				&\int_{(a_i,b_i)\times D^c } \dfrac{(f_n(x)-f_n(y))^2}{|x-y|^{1+\alpha}}\d x \d y\\
				\leq &2\int_{(a_i,b_i)\times D^c} \left[\dfrac{(g(x)-f_n(x))^2}{|x-y|^{1+\alpha}}+ \dfrac{(g(x)-g(y))^2}{|x-y|^{1+\alpha}}\right]\d x \d y.
			\end{align*}
			\item\label{integration4} For $1\le i,j\le n$ and $i\neq j$,
			\begin{align*}
				&\int_{(a_i,b_i)\times (a_j,b_j) } \dfrac{(f_n(x)-f_n(y))^2}{|x-y|^{1+\alpha}}\d x \d y\\
				\le &3\int_{(a_i,b_i)\times (a_j,b_j) } \left[\dfrac{(f_n(x)-g(x))^2}{|x-y|^{1+\alpha}}+ \dfrac{(f_n(y)-g(y))^2}{|x-y|^{1+\alpha}}+ \dfrac{(g(x)-g(y))^2}{|x-y|^{1+\alpha}}\right]\d x \d y.
			\end{align*}
		\end{enumerate}
		Summing the parts \eqref{integration3} and \eqref{integration4}, we have
		\begin{align*}
			&\sum_{1\leq i\leq n}\int_{(a_i,b_i)\times(a_i,b_i)^c } \dfrac{(f_n(x)-f_n(y))^2}{|x-y|^{1+\alpha}}\d x \d y\\
			\le&6\sum_{1\leq i\leq n}\int_{(a_i,b_i)\times(a_i,b_i)^c }\left[\dfrac{(g(x)-f_n(x))^2}{|x-y|^{1+\alpha}}+ \dfrac{(g(x)-g(y))^2}{|x-y|^{1+\alpha}}\right]\d x \d y\\
			\le& 6(1+c_\alpha)\left(\sum_{1\leq i\leq n}\int_{(a_i,b_i)\times(a_i,b_i)^c } \dfrac{(g(x)-g(y))^2}{|x-y|^{1+\alpha}}\d x \d y+\norm{g-f_n}^2_{L^2}\right),
		\end{align*}
		where the last inequality follows from Proposition \ref{hardy}. Further adding \eqref{integration1} , \eqref{integration2}, and $\norm{f_n}^2_{L^2}$, we get $\norm{f_n}_{\EE_1}\le 6(1+c_\alpha)\norm{g}_{\EE_1}$.
		
		\textbf{Step 2}.
		Denote $h=g-f$. It suffices to prove $h\in \mathcal{G}$. Since $f\in H^{\alpha/2}$, so is $h$. Then we can take $\varepsilon$-contraction on $h$,
		$$h_{\varepsilon}:=h-(-\varepsilon)\vee h\wedge \varepsilon.$$ 
		It is known that $h_{\varepsilon} \stackrel{\mathcal{E}_1}{\rightarrow}h$ as $\varepsilon \downarrow 0$
		and hence
		it is enough to prove $h_{\varepsilon}\in\GG$ for any $\varepsilon>0$.
		
		Recall that $\{h>0\}=\{f<g\}=\bigcup_{i\geq 1}(a_i,b_i)$.
		Since $h$ is uniformly continuous, there exist at most finite $i$ such that $$(a_i,b_i)\cap\{h>\varepsilon \} \neq \emptyset .$$ Without loss of generality, we assume  $$h_{\varepsilon}=\left( \sum_{1\leq i\leq N}h1_{(a_i,b_i)} \right)_{\varepsilon}= \sum_{1\leq i\leq N}\left(h1_{(a_i,b_i)}\right)_{\varepsilon} $$ for some integer $N$. Then we need only to prove that for each $i$, $(h1_{(a_i,b_i)})_\varepsilon\in\mathcal{G}$. Remember that $f$ is constant on $[a_i,b_i]$, i.e.
		$f(x)=d_i$ for $x\in [a_i,b_i]$ for some constant $d_i$. Hence, $h1_{(a_i,b_i)}=(g-d_i)1_{(a_i,b_i)}$.
		Since $g(a_i)=g(b_i)=d_i$, it follows from continuity that there exists $\delta>0$ such that for $x\in [a_i,a_i+\delta]\cup [b_i-\delta,b_i]$, $$0\le g(x)-d_i \leq \varepsilon.$$
		Due to the regularity of $\GG$, we can take a continuous function $g_i \in \mathcal{G}$, such that $0\le g_i\le 1$ and
		$$
		g_i(x)=\left\{
		\begin{array}{ll}
			1,\ &x\in (a_i+\delta,b_i-\delta];\\
			0,\ &x\in (-\infty, a_i]\cup [b_i,\infty).
		\end{array} \right.
		$$
		It follows easily that $(g-d_i)g_i\in \mathcal{G}$.
		The difference between $(g-d_i)1_{(a_i,b_i)}$ and $(g-d_i)g_i$
		is located on $(a_i,a_i+\delta]\cup(b_i-\delta, b_i)$  and both
		will be erased by $\varepsilon$-contraction. 
		Therefore it holds that $$(h1_{(a_i,b_i)})_{\varepsilon}=
		((g-d_i)1_{(a_i,b_i)})_{\varepsilon}=((g-d_i)g_i)_{\varepsilon}\in\GG.$$
		That completes the proof.
	\end{proof}

	\section{Proof of the `only if' part of Theorem \ref{mainthm}}\label{case1}
	We shall prove that for $0<\alpha<1$, a symmetric $\alpha$-stable process has no proper regular subspaces. If $H^{\alpha/2}$ has a regular subspace, we denote it by $\mathcal{G}$ and deduce that in fact $\mathcal{G}=H^{\alpha/2}$. Before the proof,
	let us prepare a few lemmas. 
	The first lemma is an easy but key observation.
	
	\begin{lemma} \label{lemma11}
		For $a<b$, $1_{(a,b]}\in H^{\alpha/2}$ if and only if $\alpha<1$.
	\end{lemma}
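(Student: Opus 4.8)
The plan is to verify the defining condition \eqref{def} directly for the function $f=1_{(a,b]}$. That $f\in L^2(\IR)$ is clear since $f$ is bounded with compact support, so everything comes down to the finiteness of
\[
I:=\int_{\IR^2}\frac{(f(x)-f(y))^2}{|x-y|^{1+\alpha}}\,\d x\,\d y.
\]
The first observation is that $(f(x)-f(y))^2=1$ exactly when one of $x,y$ lies in $(a,b]$ and the other does not, and equals $0$ otherwise. Ignoring the two endpoints (a Lebesgue-null set in each variable) and using the symmetry of the kernel in $x$ and $y$, I would rewrite $I$ as
\[
I=2\int_{(a,b)}\int_{(a,b)^c}\frac{\d y\,\d x}{|x-y|^{1+\alpha}}.
\]

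The second step is to evaluate the inner integral by Tonelli. For fixed $x\in(a,b)$, I split the $y$-range into $(-\infty,a)$ and $(b,\infty)$ and use the primitive $\tfrac1\alpha(x-y)^{-\alpha}$ of $|x-y|^{-1-\alpha}$, which yields
\[
\int_{(a,b)^c}\frac{\d y}{|x-y|^{1+\alpha}}=\frac1\alpha\big((x-a)^{-\alpha}+(b-x)^{-\alpha}\big),
\]
valid for every $\alpha>0$. For $\alpha\neq 1$ this is precisely the first equality in Proposition \ref{hardy} applied to the constant function $1$ on $(a,b)$; the borderline value $\alpha=1$ is covered by the same one-line computation. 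Substituting back gives
\[
I=\frac2\alpha\int_a^b\big((x-a)^{-\alpha}+(b-x)^{-\alpha}\big)\,\d x.
\]

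Finally, the finiteness of $I$ is decided by the behaviour of $(x-a)^{-\alpha}$ near $x=a$ and, symmetrically, of $(b-x)^{-\alpha}$ near $x=b$: the integral $\int_a^b(x-a)^{-\alpha}\,\d x$ converges if and only if $\alpha<1$, and diverges once $\alpha\ge 1$ (logarithmically at $\alpha=1$, as a power otherwise). This gives $I<\infty\iff\alpha<1$, which is the claim. There is no genuine obstacle here; the only points needing a little care are treating the endpoints as a null set and including the borderline case $\alpha=1$. For the latter reason I would carry out the $y$-integration by hand rather than quote Proposition \ref{hardy} verbatim, since that proposition explicitly excludes $\alpha=1$.
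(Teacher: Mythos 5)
Your proposal is correct and follows essentially the same route as the paper: both reduce the question to the double integral of $|x-y|^{-1-\alpha}$ over $(a,b]\times(a,b]^c$ and evaluate it directly, obtaining $\frac{4}{\alpha(1-\alpha)}(b-a)^{1-\alpha}$ for $\alpha<1$ and divergence for $\alpha\ge 1$. Your slightly more explicit handling of the inner $y$-integral and of the borderline case $\alpha=1$ is a harmless elaboration of the same computation.
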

	
	\begin{proof} This is a direct computation. When $f=1_{(a,b]}$ and $\alpha\in(0,1)$,
		\begin{align*}&\int_{\IR^2}\dfrac{(f(x)-f(y))^2} {\abs{x-y}^{\alpha +1}}\d x\d y\\
			=&2\left(\int_{x\in(a,b],y>b}+\int_{x\in (a,b],y\le a}\right)\dfrac{1} {\abs{x-y}^{\alpha+1}}\d x\d y
			\\
			=&\dfrac{4}{\alpha(1-\alpha)}(b-a)^{1-\alpha}.
		\end{align*}
		When $\alpha\ge 1$, this integral diverges.
	\end{proof}
	Actually we don't need to distinguish $1_{(a,b]}$ and $1_{[a,b]}$.
	Denote by $$S_0=\left\{\sum_{i=1}^K c_i1_{(a_i,b_i]}:K\ \text{is an integer},~ c_i\in \mathbb{R},~ (a_i,b_i]\text{ are disjoint intervals}\right\}$$ the set of step functions. It is easily seen that $S_0\subset H^{\alpha/2} $ when $0<\alpha<1$.
	
	\begin{lemma}\label{S0}
		$S_0$ is dense in $H^{\alpha/2}$.
	\end{lemma}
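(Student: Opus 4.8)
The plan is to approximate a dense class of smooth functions by step functions in the $\EE_1$-norm, the decisive point being a near-diagonal estimate that is finite precisely when $\alpha<1$. Writing the Gagliardo seminorm $[u]^2:=\int_{\IR^2}\frac{(u(x)-u(y))^2}{|x-y|^{1+\alpha}}\,\d x\d y$, so that $\EE(u,u)=\tfrac{C(\alpha)}{2}[u]^2$, I recall that $C^\infty_c$ is dense in $H^{\alpha/2}$ (a standard fact for fractional Sobolev spaces on $\IR$). Hence it suffices to show that every $f\in C^\infty_c$ lies in the $\EE_1$-closure of $S_0$.

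Fix such an $f$, supported in some $[-R,R]$, and for $n\ge 1$ let $f_n\in S_0$ be the step function taking the constant value $f(i/n)$ on each interval $I_i=((i-1)/n,i/n]$. Two properties are immediate: $f_n\to f$ uniformly, hence $\norm{f-f_n}_{L^2}\to 0$; and all $f_n$ are supported in a fixed compact set. The goal is to combine these with a uniform energy bound.

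The core step is to establish $\sup_n [f_n]<\infty$. I would split the integral defining $[f_n]^2$ according to the location of the pair $(x,y)$: pairs lying in a common interval contribute nothing since $f_n$ is locally constant; pairs with $|x-y|>1$ are controlled by $\norm{f_n}_{L^2}^2$, since $\int_{|z|>1}|z|^{-1-\alpha}\,\d z<\infty$; and the remaining near-diagonal pairs, in distinct intervals, are handled using $|f(i/n)-f(j/n)|\le \norm{f'}_\infty|i-j|/n$. The only delicate contribution comes from adjacent intervals $I_i,I_{i+1}$, where the change of variables $x=i/n-u$, $y=i/n+v$ reduces the integral to $\int_0^{1/n}\!\int_0^{1/n}(u+v)^{-1-\alpha}\,\d u\d v=C_\alpha\,n^{\alpha-1}$ with $C_\alpha=\int_0^1\!\int_0^1(s+t)^{-1-\alpha}\,\d s\d t$, and $C_\alpha<\infty$ exactly when $\alpha<1$. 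Multiplying by the squared jump, which is $O(n^{-2})$, and summing over the $O(nR)$ relevant pairs gives a total of order $n^{\alpha-2}\to 0$. This is the step where the hypothesis $\alpha<1$ is essential, reappearing in the same guise as in Lemma \ref{lemma11}; it is also the main obstacle, the rest being bookkeeping.

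Finally I would conclude as in the proof of Lemma \ref{normcontract}: the sequence $(f_n)$ is bounded in the Hilbert space $(H^{\alpha/2},\EE_1)$, so a subsequence converges weakly, and the weak limit must be $f$, because $\EE_1$-boundedness forces weak $L^2$-convergence while $f_n\to f$ in $L^2$. Hence the Ces\`aro means of a further subsequence, which still lie in the linear space $S_0$, converge to $f$ in $\EE_1$. Therefore $f\in\overline{S_0}^{\,\EE_1}$, and since $C^\infty_c$ is dense we obtain $\overline{S_0}^{\,\EE_1}=H^{\alpha/2}$.
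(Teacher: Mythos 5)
Your proposal is correct and follows essentially the same route as the paper: approximate $f\in C^\infty_c$ by dyadic step functions and control the Gagliardo seminorm by splitting the double integral into same-interval, far-field and near-diagonal pieces, the adjacent-interval computation $\int_0^{1/n}\int_0^{1/n}(u+v)^{-1-\alpha}\,\d u\,\d v=C_\alpha n^{\alpha-1}$ being exactly where $\alpha<1$ enters, just as in the paper. The only structural difference is that you bound $[f_n]$ uniformly and then invoke weak compactness and Ces\`aro means, whereas the paper applies the same splitting to $f-f_n$ (whose sup norm is $O(2^{-n})$) and finds $[f-f_n]^2=O(2^{n(\alpha-1)})\to 0$, so the step functions converge to $f$ strongly in $\EE_1$ and the compactness detour is unnecessary.
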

	\begin{proof}
		It is well-known that
		$C^{\infty}_c$ is dense in $H^{\alpha/2}$. Hence it suffices to prove for any $f \in C^{\infty}_c$, we can find a sequence $\{f_n\}\subset S_0$ such that $f_n \stackrel{\mathcal{E}_1}{\longrightarrow}f$. Assume that $f$ is supported on $[-N,N]$ and $|f^{\prime}|<c$ for some positive constants $N$ and $c$.
		
		Take $f_n(x)=f(\frac{i}{2^n})$, whenever $\frac{i}{2^n}< x\leq \frac{i+1}{2^n}$
		for any integer $i$. It obvious that $|f(x)-f_n(x)|\le c/2^n$ for
		any $x\in\IR$,
		$f_n\in S_0$ and $f_n$ converges to $f$ both pointwise and in $L^2$. We now aim to prove $f_n$ converges to $f$ in $\mathcal{E}$-norm.
		In the following proof, $c_i$ are all positive constants.
		\begin{small}
			\begin{align*}
				&\int_{\mathbb{R}^2}\dfrac{((f-f_n)(x)-(f-f_n)(y))^2}{|x-y|^{1+\alpha}}dxdy\\
				=&
				\left(2\int_{[-N,N]\times [-N,N]^c }
				+\int_{[-N,N]\times [-N,N]  }\right)\dfrac{((f-f_n)(x)-(f-f_n)(y))^2}{|x-y|^{1+\alpha}}\d x\d y\\
				=&
				\bigg(2\int_{[-N,N]\times [-N,N]^c }
				+2\sum_{-N 2^n\leq i<j < N 2^n} \int_{[\frac{i}{2^n},\frac{i+1}{2^n}]\times [\frac{j}{2^n},\frac{j+1}{2^n}]  }+\sum_{-N 2^n\leq i < N 2^n} \int_{[\frac{i}{2^n},\frac{i+1}{2^n}]\times [\frac{i}{2^n},\frac{i+1}{2^n}]  }\bigg)
				\\&\dfrac{((f-f_n)(x)-(f-f_n)(y))^2}{|x-y|^{1+\alpha}}\d x\d y=:I_1+I_2+I_3.
			\end{align*}
		\end{small}It can be verified that
			\begin{align*}
					I_1&\le 2\int_{[-N,N]\times \mathbb{R}\backslash [-N,N] }\dfrac{(c\frac{1}{2^n})^2}{|x-y|^{1+\alpha}}\d x\d y\le c_12^{-2n};\\
					I_2&\le 2\sum_{-N 2^n\leq i<j < N 2^n} \int_{[\frac{i}{2^n},\frac{i+1}{2^n}]\times [\frac{j}{2^n},\frac{j+1}{2^n}]  }\dfrac{(2c\frac{1}{2^n})^2}{|x-y|^{1+\alpha}}\d x\d y\\
					&\le c_2\sum_{-N 2^n\leq i<j \leq N 2^n} 2^{n(\alpha-3)}\le  4c_2N^22^{n(\alpha-1)};\\
					I_3&\le \sum_{-N 2^n\leq i < N 2^n} \int_{[\frac{i}{2^n},\frac{i+1}{2^n}]\times [\frac{i}{2^n},\frac{i+1}{2^n}]  }\dfrac{(c(x-y))^2}{|x-y|^{1+\alpha}}\d x\d y \\
					&\le c_3  \sum_{-N\times 2^n\leq i \leq N\times 2^n} 2^{n(\alpha-3)}\le 2c_3N2^{n(\alpha-2)}, 
				\end{align*}
				where for $I_3$, when $x,y\in [\frac{i}{2^n},\frac{i+1}{2^n}]$, we use the estimate $|(f-f_n)(x)-(f-f_n)(y)|=|f(x)-f(y)|\le c|x-y|$.
			Therefore  $I_1+I_2+I_3\le c_42^{n(\alpha-1)}$,
			which converges to 0 as $n\rightarrow \infty$.
		\end{proof}
		
		By this lemma, to prove that $\mathcal{G}=H^{\alpha/2}$, we only need to prove
		any indicator function $1_{[a,b]} \in \mathcal{G}$. To do this, we
		introduce a type of functions close to indicator functions.
		Fix $a<b$ from now and assume that 
		a continuous function $h_{\rho}$
		satisfies the following condition, 
		$$
		h_{\rho}(x)=\begin{cases}
			1 ,&  x\in[a,b];\\
			0  ,&  x\in(-\infty, a-\rho]\cup [b+\rho,\infty);\\
			\text{increasing} ,\ &  x\in(a-\rho,a];\\
			\text{decreasing} ,\ &  x\in(b, b+\rho).
		\end{cases}
		$$
		with some $\rho>0$. 
		Notice that when $x \in (a-\rho, a)\cup (b, b+\rho)$, $h_\rho(x)$ is not specifically defined, as long as it is monotone. Denote by $M_\rho$  the collection of all such functions. Now it suffices to prove the following lemma
		\begin{lemma}\label{hn}
			For any $\rho>0$, it holds that
			\begin{enumerate}[(i)]
				\item\label{ladder1}  $M_\rho\cap \mathcal{G}
				\neq \emptyset$;
				\item\label{ladder2} $\big\{\norm{f}_{\EE_1}:f\in M_\rho\big\}$ are uniformly bounded. 
			\end{enumerate}
		\end{lemma}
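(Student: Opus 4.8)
The plan is to treat the two assertions separately: for (i) I would use the regularity of $\mathcal{G}$ together with the erased-function Lemma \ref{normcontract}, and for (ii) a layer-cake reduction to indicator functions combined with Lemma \ref{lemma11}.

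For (i), first I would use the regularity of $\mathcal{G}$ (existence of cutoff functions in a regular Dirichlet form) to produce $g\in C_c\cap\mathcal{G}$ with $g\ge 1$ on $[a,b]$ and $\supp g\subset(a-\rho,b+\rho)$. Applying the normal contraction $g\mapsto 0\vee g\wedge 1$, which $\mathcal{G}$ inherits as a Dirichlet form, I may assume $0\le g\le 1$ and $g=1$ on $[a,b]$. From $g$ I construct an element of $M_\rho$ as the monotone envelope
\[
f(x)=\begin{cases}\inf_{x\le z\le a}g(z),& x<a;\\ 1,& a\le x\le b;\\ \inf_{b\le z\le x}g(z),& x>b.\end{cases}
\]
Then $f$ is continuous, $f=1$ on $[a,b]$, $f=0$ outside $(a-\rho,b+\rho)$ (using $g(a-\rho)=g(b+\rho)=0$), nondecreasing on $(a-\rho,a)$ and nonincreasing on $(b,b+\rho)$, so $f\in M_\rho$.

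The crux of (i) is that $f$ is an erased function of $g$. Indeed $f\le g$ everywhere (take $z=x$ in the infimum, and $f=g=1$ on $[a,b]$), so $\{f<g\}\subset(-\infty,a)\cup(b,\infty)$. On a component of $\{f<g\}$ inside $(-\infty,a)$, if $f(x)<g(x)$ then the infimum defining $f(x)$ is attained at some $z^*>x$ with $g(z^*)=f(x)$; for $x'\in(x,z^*)$ monotonicity gives $f(x)\le f(x')\le g(z^*)=f(x)$, so $f$ is constant there, and symmetrically on $(b,\infty)$. Hence $f$ is constant on each connected component of $\{f<g\}$, and since $f,g$ are non-negative functions in $C_c$ with $g\in\mathcal{G}$, Lemma \ref{normcontract} yields $f\in\mathcal{G}$. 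Thus $f\in M_\rho\cap\mathcal{G}$, proving (i).

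For (ii) I would bound $\norm{f}_{\EE_1}^2$ uniformly over $f\in M_\rho$. The $L^2$ part is immediate, $\norm{f}_{L^2}^2\le b-a+2\rho$, since $0\le f\le1$ and $\supp f\subset[a-\rho,b+\rho]$. For the energy, using $|f|\le1$ gives $(f(x)-f(y))^2\le|f(x)-f(y)|$, and since $f$ increases to $1$ and then decreases, the superlevel sets $\{f>t\}$ are intervals of length at most $b-a+2\rho$. The layer-cake identity
\[
|f(x)-f(y)|=\int_0^1\big(1_{\{f>t\}}(x)-1_{\{f>t\}}(y)\big)^2\,\d t,
\]
valid because the nested intervals make the indicator differences of a single sign, together with Tonelli and Lemma \ref{lemma11} gives
\[
\int_{\mathbb{R}^2}\frac{(f(x)-f(y))^2}{|x-y|^{1+\alpha}}\,\d x\,\d y\le\frac{4}{\alpha(1-\alpha)}(b-a+2\rho)^{1-\alpha},
\]
a constant independent of $f\in M_\rho$, which yields the uniform bound.

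The main obstacle I anticipate is in (i): first securing the cutoff $g\in C_c\cap\mathcal{G}$ from the regularity of $\mathcal{G}$ with the correct support and boundary values, and then verifying carefully that the monotone envelope $f$ is genuinely an erased function of $g$, in particular that it is constant on every connected component of $\{f<g\}$, so that Lemma \ref{normcontract} applies. By contrast, the estimate in (ii) is routine once the layer-cake reduction to indicators is made and Lemma \ref{lemma11} is invoked.
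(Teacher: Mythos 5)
Your proof is correct. Part (i) follows the paper's argument essentially verbatim: the paper also takes a cutoff $g\in C_c\cap\mathcal{G}$ from regularity and forms the same monotone (``Skorokhod-type'') envelope $g^*$, then invokes Lemma \ref{normcontract}; you merely spell out the verification that the envelope is an erased function, which the paper leaves implicit. Part (ii) is where you genuinely diverge. The paper works on the Fourier side: it integrates by parts, uses that every $f\in M_\rho$ has total variation $2$ to get $|\hat f(\xi)|\le 2/|\xi|$ for $|\xi|\ge 1$, and concludes from $\int_{|\xi|\ge1}|\xi|^{\alpha-2}\,\d\xi<\infty$ (here $\alpha<1$ enters). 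You instead use the layer-cake identity $(f(x)-f(y))^2\le\int_0^1\bigl(1_{\{f>t\}}(x)-1_{\{f>t\}}(y)\bigr)^2\d t$, Tonelli, the fact that the superlevel sets of a function in $M_\rho$ are intervals of length at most $b-a+2\rho$, and the explicit computation of Lemma \ref{lemma11} (here $\alpha<1$ enters through $(b-a)^{1-\alpha}$). Both are valid; your route is more elementary and reuses Lemma \ref{lemma11} directly, giving an explicit constant $\tfrac{4}{\alpha(1-\alpha)}(b-a+2\rho)^{1-\alpha}$, whereas the paper's Fourier/BV estimate is the one it later recycles word-for-word for general symmetric L\'evy processes of finite variation in Section \ref{generalsec}, where the energy is expressed through the characteristic exponent $\psi$ rather than the kernel $|x-y|^{-1-\alpha}$ (though your argument would also transfer there via the indicator computation in that section).
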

		If the lemma is proved, we take a sequence $\{\rho_n\}$ decreasing to 0, say $\rho_n=1/n$. Due to \eqref{ladder1}, for each $n$,
		there is a function $h_{\rho_n}\in M_{\rho_n}\cap \mathcal{G}$. It follows from \eqref{ladder2} that the norms $\big\{\Vert h_{\rho_n} \Vert_{\mathcal{E}_1}:n\ge 1\big\}$ is bounded. Also note that $h_{\rho_n}$ converges pointwise to $1_{[a,b]}$. Hence the standard arguments yield that $1_{[a,b]}\in\mathcal{G}$.
		\begin{proof}
			For \eqref{ladder1}, since $\mathcal{G}$ is regular, there exists $g \in \mathcal{G}\cap C_c$ such that
			$0\le g\le 1$ and
			\begin{align*}
				g(x)=\begin{cases}
					1,\ & x\in[a,b];\\
					0,\ & x\in (a-\rho, b+\rho)^c.
				\end{cases}
			\end{align*}
			Then we define
			\begin{align}\label{skorokhod}
				g^*(x):=\left\{
				\begin{array}{ll}
					1,\ &  x\in[a,b];\\
					0,\ &  x\in (a-\rho, b+\rho)^c
					;\\
						\inf\{g(t):t\in [x,a]\},\ &  x\in(a-\rho,a);\\
							\inf\{g(t):t\in [b,x]\},\ &  x\in(b, b+\rho).
						\end{array} \right. 
					\end{align}
					The construction is just like the Skorokhod decomposition and shares the same good properties: $g^*\in M_\rho$ and $g^*$ is an erased function of $g$. By Lemma \ref{normcontract}, we have $g^*\in \GG$.
					
					For \eqref{ladder2}, we only need to prove that 
					$$\sup_{f\in M_\rho}\int_{|\xi|\geq 1}|\hat{f}(\xi)|^2|\xi|^{\alpha}d\xi<\infty.$$ Using the integration by parts formula and noticing that any $f\in M_\rho$ is of finite variation, we have for $\abs{\xi}\ge 1$,
					\begin{align*}
						\hat{f}(\xi)&=\frac{1}{\sqrt{2\pi}}\int_{\mathbb{R}}f(x)(\cos(x\xi)+i\sin(x\xi))\d x\\
						&=-\frac{1}{\sqrt{2\pi}}\left(\int_{\mathbb{R}}\dfrac{\sin(x\xi)}{\xi}\d f(x)-i\int_{\mathbb{R}}\dfrac{\cos(x\xi)}{\xi}\d f(x)\right).
					\end{align*}
					Observe that the total variation of $f$ is $2$. Then it follows that\begin{equation}\label{keyinequality} |\hat{f}(\xi)|\leq {\dfrac{4}{\sqrt{2\pi}\abs{\xi}}}\le \dfrac{2}{\abs{\xi}}\, \text{ for }\abs{\xi}\ge 1.\end{equation}
					Therefore, for any $f\in M_\rho$,
					$$\int_{|\xi|\geq 1}|\hat{f}(\xi)|^2|\xi|^{\alpha}d\xi \leq \int_{|\xi|\geq 1} 4|\xi|^{\alpha-2}d\xi<\infty.$$
					That completes the proof of the lemma and also the `only if' part of Theorem \ref{mainthm}.   
				\end{proof}

							\section{Proof of Theorem \ref{char1}}\label{charsect}
							In this section, we will prove Theorem \ref{char1}, which gives a characterization of the regular subspaces of
							$\alpha$-stable process when $\alpha\in(1,2)$. The former part of Theorem \ref{char1} is straightforward. The first conclusion `$\cF_s\subset H^{\alpha/2}$' comes from  $s\in H^{\alpha/2}_{\loc}$ and the definition of $\mathbf{L}_s$. Moreover, $\mathcal{F}_s$ is a lattice separating points. Hence it is dense in $C_c$ under the norm $\norm{\cdot}_\infty$. Then it's standard to verify that $\overline{\mathcal{F}_s}$ is a regular subspace of $H^{\alpha/2}$.
							
							The latter part of \ref{char1} is more involved. Assume $\GG$ is a regular subspace of $H^{\alpha/2}$. We shall find $s\in \bS^{\alpha/2}_{\loc}$ such that $\GG=\overline{\cF_s}$.
							The key is to decompose a non-negative function in $C_c\cap \GG$ into a countable sum of ladder-like functions, that are functions in $C_c$ which first increase to a maximum, then decrease to $0$ and stay at $0$ afterwards. Based on this, the main idea of the following proof is to first take a countable set of functions in $C_c\cap \GG$ which is dense in $\GG$, and then use the ladder-like functions in the decompositions of these functions to construct the function $s$ we need.
							
							\begin{lemma}\label{decomposition}
								Let $1<\alpha\le 2$ and $\GG$ be a regular subspace of $H^{\alpha/2}$. For any non-negative function $f\in C_c\cap \GG$, there exists a 
								sequence of ladder-like functions $f^n\in\GG$, such that
								$\sum_n f^n$ converges in $\EE_1$-norm to $f$.
							\end{lemma}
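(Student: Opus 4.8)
The plan is to realize the decomposition by \emph{excursion peeling}: repeatedly strip off the outermost unimodal envelope of $f$ and recurse on the leftover bumps, indexing the resulting ladder-like pieces by the nodes of a countable tree. The two tools are Lemma \ref{normcontract} (to keep every piece inside $\GG$ with controlled $\EE_1$-norm) and the two-sided Skorokhod-type envelope already employed in \eqref{skorokhod}.

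\textbf{Step 1 (one peeling).} Given a non-negative $h\in C_c\cap\GG$, fix $x_0$ with $h(x_0)=\max h$ and define its envelope by
\[
h^*(x)=\inf_{t\in[x,x_0]}h(t)\ \ (x\le x_0),\qquad h^*(x)=\inf_{t\in[x_0,x]}h(t)\ \ (x\ge x_0).
\]
Exactly as in \eqref{skorokhod}, $h^*$ is ladder-like and is an erased function of $h$, so Lemma \ref{normcontract} gives $h^*\in\GG$ and $\norm{h^*}_{\EE_1}\le 6(1+c_\alpha)\norm{h}_{\EE_1}$. The leftover $h-h^*\ge 0$ is supported on $\{h^*<h\}=\bigcup_i(a_i,b_i)$ and vanishes at every $a_i,b_i$, so each single bump $(h-h^*)1_{(a_i,b_i)}$ lies in $C_c$; since $h-h^*\in\GG$ and $(h-h^*)1_{(a_i,b_i)}$ agrees with $h-h^*$ on $(a_i,b_i)$ while being $0$ (hence constant) on every other component, it is an erased function of $h-h^*$ and therefore again lies in $\GG$ by Lemma \ref{normcontract}.

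\textbf{Step 2 (the tree and the uniform bound).} Apply Step 1 to $f$, then to each leftover bump, and so on, producing ladder-like functions $g_\nu\in\GG$ indexed by the nodes $\nu$ of a countable tree. Set $S_K:=\sum_{|\nu|\le K}g_\nu$ and $r_{K+1}:=f-S_K\ge 0$, the sum of the level-$(K{+}1)$ bumps. On any level-$(K{+}1)$ excursion interval every ancestor envelope $g_\nu$ with $|\nu|\le K$ is constant, so $S_K$ is constant on each component of $\{S_K<f\}$; thus $S_K$ is itself an erased function of $f$, and Lemma \ref{normcontract} yields the \emph{uniform} bound $\norm{S_K}_{\EE_1}\le 6(1+c_\alpha)\norm{f}_{\EE_1}$. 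Moreover $0\le S_K\uparrow f$ pointwise (the depth $r_K(x)$ is dominated by the oscillation of $f$ over a shrinking excursion interval through $x$, which tends to $0$), and $S_K\le f\in L^2$, so $S_K\to f$ in $L^2$ by dominated convergence. Enumerating the countably many $g_\nu$ will then give the candidate series $\sum_n f^n$.

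\textbf{Step 3 (the crux: $\EE_1$-convergence).} Everything reduces to the energy convergence $\EE(S_K,S_K)\to\EE(f,f)$: combined with the $L^2$-convergence and the weak $\EE_1$-compactness of the bounded family $\{S_K\}$, this upgrades the weak convergence $S_K\rightharpoonup f$ to strong convergence in $\EE_1$, whence the level sums, and after enumeration the series, converge. For $\alpha=2$ this is immediate, since erasing merely deletes the gradient on the excursion intervals and leaves $f$ untouched elsewhere, so $\EE(S_K,S_K)$ increases monotonically up to $\EE(f,f)$. For $\alpha\in(1,2)$ this monotonicity fails: the nonlocal cross terms in \eqref{def} can raise the energy under erasing — precisely why the constant in Lemma \ref{normcontract} exceeds $1$ — and this is the main obstacle. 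The intended resolution mirrors Step 1 of Lemma \ref{normcontract}: I would split $\EE(r_{K+1},r_{K+1})$ over the excursion intervals and apply Proposition \ref{hardy} to try to obtain a contraction estimate of the form $\EE(r_{K+1},r_{K+1})\le\theta\,\EE(r_K,r_K)+C\norm{r_K}^2_{L^2}$ with $\theta<1$, so that $\norm{r_K}_{L^2}\to 0$ forces $\EE(r_K,r_K)\to 0$. Failing a clean contraction, the uniform bound and $L^2$-convergence still place $f$, via the Banach--Saks/Ces\`aro argument of Lemma \ref{normcontract}, in the $\EE_1$-closed convex cone generated by the ladder-like functions $g_\nu\in\GG$; the delicate remaining task is then to convert this into a genuine \emph{non-negative} ladder series, which I expect to be the technical heart of the proof.
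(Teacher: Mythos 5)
Your Steps 1 and 2 are exactly the paper's construction: the same Skorokhod-type envelope, the same recursion indexed by a countable tree, and the same key observation that each partial sum over a finite subtree is an erased function of $f$, so Lemma \ref{normcontract} gives both membership in $\GG$ and a uniform $\EE_1$-bound. One caveat on your pointwise-convergence parenthetical: the nested excursion intervals $[l_n,r_n]$ containing a fixed $x$ need \emph{not} shrink to a point, so ``oscillation over a shrinking excursion interval'' is not by itself a justification. The paper's argument is that $[l_{n+1},r_{n+1}]$ is contained in one of the two arms $(l_n,T_n)$ or $(T_n,r_n)$, hence $(T_n-l_n)\wedge(r_n-T_n)\to 0$; since $f(l_n)=f(r_n)\le S_m(x)\le f(T_n)$, uniform continuity still forces $f(T_n)-f(l_n)\to 0$ (in particular $f$ is constant on any non-degenerate limit interval). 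You should supply this refinement.

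The genuine gap is your Step 3, as you acknowledge. The proposed contraction estimate $\EE(r_{K+1},r_{K+1})\le\theta\,\EE(r_K,r_K)+C\norm{r_K}^2_{L^2}$ with $\theta<1$ is not the paper's route and there is no mechanism producing such a $\theta$: a single peeling may strip off an arbitrarily small fraction of the energy (think of $f$ consisting of one dominant excursion sitting on a thin pedestal), so no level-independent contraction factor is available. The paper instead closes the argument in one stroke with the standard Dirichlet-form compactness fact already used in Step 1 of Lemma \ref{normcontract}: since $S_n\to f$ pointwise (hence in $L^2$, being dominated by $f$) and $\sup_n\norm{S_n}_{\EE_1}<\infty$, the limit $f$ lies in the $\EE_1$-closure of the linear span of the pieces, Ces\`aro means of a subsequence converging strongly. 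Your worry about recovering a ``genuine non-negative ladder series'' from the Ces\`aro means is also unfounded: each Ces\`aro mean $\frac1N\sum_{k\le N}S_{n_k}$ equals $\sum_u\lambda_u^{(N)}f_u^*$ with $\lambda_u^{(N)}\in[0,1]$, and each $\lambda_u^{(N)}f_u^*$ is again a ladder-like function of $\GG$; moreover, for the only downstream use of the lemma (that the closed linear span of the ladder pieces of a countable dense family is dense in $\GG$, so that $\GG\subset\overline{\cF_s}$), weak $\EE_1$-convergence already suffices, a closed subspace being weakly closed. In short, your fallback \emph{is} the paper's argument; you should carry it out directly rather than hunt for a contraction inequality.
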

							\begin{proof}
								First, we associate every non-negative function $f\in C_c$ with a ladder-like function as follows. Let $T=T^f$ be the minimum of the maximum points of $f$, i.e. $T=\inf\big\{x\in \IR:f(x)=\norm{f}_\infty\big\}$ . We consider the following  decomposition of $f$, which is similar to that in \eqref{skorokhod}.
								\begin{align*}
									f^*(x):=\left\{
									\begin{array}{ll}
										\inf\{f(t):t\in[x,T]\},\ &  x\in(-\infty, T],\\
										\inf\{f(t):t\in[T,x]\},\ &  x\in(T,\infty).
									\end{array} \right. 
								\end{align*}
								We call $f^*$ the ladder-like function associated with $f$. Moreover, we denote $\{f^*<f\}=\bigcup_{i\ge 1}(a_i,b_i)$, where $(a_i,b_i)$ are disjoint intervals. 
								Let $f_i:=(f-f^*)1_{(a_i,b_i)}$. Then
								\[f=f^*+\sum_i f_i,\]
								which is called the ladder decomposition of $f$. And $\{f_i:i\ge 1\}$ are called the ladder excursions of $f$.
								
								Now we proceed to consider the ladder decomposition of each $f_i$. Let $\{f_{ij}:j\ge 1\}$ be the ladder excursions of $f_i$. Recursively, suppose we have defined $f_{n_1n_2\cdots n_m}$ for $n_1n_2\cdots n_m=(n_1,n_2,\cdots,n_m)\in \N^m$, where $\N:=\{1,2,\cdots\}$. Then $\{f_{n_1n_2\cdots n_mj}:j\ge 1\}$ is defined to be the ladder excursions of $f_{n_1n_2\cdots n_m}$.
								We also denote $f_\emptyset=f$. In this way, we have defined $\big\{f_u:u\in \{\emptyset\}\cup\bigcup_{n\in \N}\N^n\big\}$. 
								
								Note that $\{\emptyset\}\cup\bigcup_{n\in \N}\N^n$ can be naturally viewed as the vertices set of a tree $\T=(V, E)$ where $n_1n_2\cdots n_m$ has children $\{n_1n_2\cdots n_mi:i\ge 1\}$. Let $\T_n=(V_n,E_n)$ be a sequence of finite connected subgraphs of $\T$, such that $\emptyset\in \T_n$ for all $n$ and $V_n\uparrow V$.
								In the following, we shall prove $\sum_{u\in V_n}f^*_u$ converges pointwise to $f$. Once it is proved, by further noting that $\sum_{u\in V_n}f^*_u$ are erased functions of $f$ and using Lemma \ref{normcontract}, we have $\sum_{u\in V_n}f^*_u\in \GG$ for all $n$ and the 
								$\EE_1$-norms of  $\sum_{u\in V_n}f^*_u$ are uniformly bounded. It follows that $\sum_{u\in V_n}f^*_u$ also converges to $f$ in $\EE_1$, which concludes the lemma.
								
								For the proof of pointwise convergence, we first present an easy observation. We can see from the construction of $f_u$ that $\sum_{u\in V_n}f^*_u$ is 
								obtained from $f$ by `erasing' the upwards excursions over the intervals $\supp(f_u)$,
								$u\in V\setminus V_n$. It follows that 
								\begin{align}\label{observation}
									\sum_{u\in V_n}f^*_u=f\text{ on }A_n:=\supp(f)\setminus \Big(\bigcup_{u\in V\setminus V_n}\supp(f_u)\Big).
								\end{align}
								Fix $x\in \supp(f)$. Then $x$ is either in finitely many $\supp(f_u)$ or in infinitely many of them. In the former case, it is direct from \eqref{observation} that  for $n$ sufficiently large, $\sum_{u\in V_n}f^*_u(x)=f(x)$. Henceforth we focus on the latter case. We call a sequence $\{u_n:n\ge 0\}$ of $V$ a ray in $\T$ if $u_0=\emptyset$ and for any $n\ge 0$, $u_n$ is the parent of $u_{n+1}$. In the latter case, we can always find a ray $\{u_n\}$ in $\T$, such that $x\in \supp(f_{u_n})$ for all $n$. 
								For convenience, we denote $T_n=T^{f_{u_n}}$ and $[l_n,r_n]=\supp(f_{u_n})$ for $n\ge 1$. From the construction of $f_{u_n}$, we see that $l_n,r_n,T_n\in A_m$ whenever $u_n\in V_m$. Combining with \eqref{observation}, we have $f(y)=\sum_{u\in V_m}f^*_u(y)$ for $y=l_n,r_n,T_n$. Hence,
								\[f(l_n)=f(r_n)\le \sum_{u\in V_m}f^*_u(x)\le f(T_n).\]
								Note that the inequality still holds if we replace $ \sum_{u\in V_m}f^*_u(x)$ with $f(x)$. So it remains to show $f(T_n)-f(l_n)=f(T_n)-f(r_n)$ goes to $0$. Back to the construction of $f_{u_n}$ again, it is seen that $[l_n,r_n]$ is decreasing and either $[l_{n+1},r_{n+1}]\subset (l_n,T_n)$ or $[l_{n+1},r_{n+1}]\subset (T_n,r_n)$. In particular, one of $(l_n,T_n)$ and $(T_n,r_n)$ is a subset of $[l_n,r_n]\setminus [l_{n+1},r_{n+1}]$. Thus, 
								\[(T_n-l_n)\wedge (r_n-T_n)\rightarrow 0.\]
								The conclusion we want then follows from the uniform continuity of $f$.
							\end{proof} 
							Now we start the construction of $s$. Since $H^{\alpha/2}$ is separable and $\GG$ is regular, we can find a countable set $\mathcal{B}\subset C_c\cap \GG$ which is dense in $\GG$. Consider $\mathcal{B}^\pm:=\{f^\pm:f\in \mathcal{B}\}$ the positive and negative parts of the functions in $\mathcal{B}$. For any $f\in \mathcal{B}^+\cup \mathcal{B}^-$, let $\{f^n:n\ge 1\}$ be the ladder-like functions satisfying the conclusion in Lemma \ref{decomposition}. For simplicity, we let $\{h_n:n\ge 1\}$ be an enumeration of $\{f^n:n\ge 1,\, f\in \mathcal{B}^+\cup \mathcal{B}^-\}$. Due to Lemma \ref{decomposition}, the linear span of $\{h_n:n\ge 1\}$ is dense in $\GG$. We denote $[l_n,r_n]=\supp(h_n)$ and $T_n=T^{h_n}$. 
							Define 
							\[h^l_n=h_n\vee 1_{[T_n,\infty]} \norm{h_n}_\infty,\quad h^r_n=h^l_n-h_n=1_{[T_n,\infty)}(\norm{h_n}_\infty-h_n).\] Intuitively, $h^l_n$ is the `left arm' of the ladder-like function $h_n$ and $h^r_n$ is a reflection of the `right arm' of $h_n$.
							
							Recall the definition of $\EE$ in \eqref{def}, whose domain can be extended to all measurable functions. For simplicity, we omit the coefficient $C(\alpha)/2$ in the definition henceforth. We shall first prove that $\EE\big(h^l_n,h^l_n\big)$ and  $\EE\big(h^r_n,h^r_n\big)$ are both finite. Since $h^r_n=h^l_n+h_n$, it suffices to prove $\EE\big(h^l_n,h^l_n\big)<\infty$. We have
							\begin{align*}
								&\EE\big(h^l_n,h^l_n\big)\\
								=&\int_{(-\infty,T_n)^2}\dfrac{(h_n(x)-h_n(y))^2}{\abs{x-y}^{1+\alpha}}\d x\d y+2\int_{(-\infty,T_n)\times(T_n,\infty)}\dfrac{(h_n(x)-\norm{h_n}_\infty)^2}{\abs{x-y}^{1+\alpha}}\d x\d y\\
								\le& \EE(h_n,h_n)+\dfrac{2\norm{h_n}_\infty^2}{\alpha(\alpha-1)(T_n-l_n)^{\alpha-1}}+2\int_{(l_n,T_n)\times(T_n,\infty)}\dfrac{(h_n(x)-\norm{h_n}_\infty)^2}{\abs{x-y}^{1+\alpha}}\d x\d y.
							\end{align*}
							It remains to bound the last term. For this, we take $\phi\in C^\infty_c$ such that $\phi=1$ on $[l_n,T_n]$ and consider $f=\phi\cdot(h_n-\norm{h_n}_\infty)$. It holds that
							\begin{align*}
								&\int_{(-\infty,T_n)^2}\dfrac{(f(x)-f(y))^2}{\abs{x-y}^{1+\alpha}}\d x\d y\\
								\le& 2 \norm{h_n}_\infty^2\EE(\phi,\phi)+2\norm{\phi}_\infty^2\int_{(-\infty,T_n)^2}\dfrac{(h_n(x)-h_n(y))^2}{\abs{x-y}^{1+\alpha}}\d x\d y<\infty.
							\end{align*}
							Note that $f\in C_\infty(-N,T_n)$ for $N$ large enough. So it follows from Lemma \ref{hardy} that
							\begin{align*}
								\int_{(l_n,T_n)\times(T_n,\infty)}\dfrac{(h_n(x)-\norm{h_n}_\infty)^2}{\abs{x-y}^{1+\alpha}}\d x\d y=\int_{(l_n,T_n)\times(T_n,\infty)}\dfrac{f(x)^2}{\abs{x-y}^{1+\alpha}}\d x\d y\\
								\le c_\alpha\left(\int_{(-\infty,T_n)^2}\dfrac{(f(x)-f(y))^2}{\abs{x-y}^{1+\alpha}}\d x\d y+\norm{f}^2_{L^2}\right)<\infty.
							\end{align*}
							
							Now we define $s$ as follows:
							\begin{equation*}
								s=\sum_{n\ge 1} 2^{-n} \dfrac{h^l_n +h^r_n}{\norm{h^l_n}_{\mathcal{E}}+\norm{h^r_n}_{\mathcal{E}}+\norm{h_n}_\infty}.
							\end{equation*}
							We see that $s$ is continuous. From the construction of $\{h_n\}$, it is easy to deduce that $s$ is strictly increasing. Moreover, since $\norm{s}_{\infty},\norm{s}_{\EE}\le 2$, we have for any $f\in C^\infty_c$, $f\cdot s\in H^{\alpha/2}$. Thus, $s\in \bS^{\alpha/2}_{\loc}$.
							
							The remaining part is devoted to the proof of $\overline{\cF_s}=\GG$. The `$\supset$' part is simple. In fact, for each $h_n$, it holds that
							\[\abs{h_n(x)-h_n(y)}\le C\abs{s(x)-s(y)},\]
							for some constant $C$. So $h_n\circ s^{-1}\in \bL_s$ and $h_n=h_n\circ s^{-1}\circ s\in \cF_s$. Recall that the linear span of $\{h_n\}$ is dense in $\GG$. Hence we get $\overline{\cF_s}\supset\GG$.
							
							For the other direction, it suffices to show every function in $\mathcal{F}_s$ can be approximated by functions in $\GG$. We only need to prove this for a certain class of functions. Fix $f\in \bL_s$. Let 
							\begin{equation*}
								s_k=\sum_{n=1}^k 2^{-n} \dfrac{h^l_n +h^r_n}{\norm{h^l_n}_{\mathcal{E}}+\norm{h^r_n}_{\mathcal{E}}+\norm{h_n}_\infty}.
							\end{equation*}
							Then $s_k$ converges pointwise to $s$, which implies $f\circ s_k$ converges pointwise to $f\circ s$. 
							Observe that 
							\[\abs{s_{k_1}(x)-s_{k_1}(y)}\le \abs{s_{k_2}(x)-s_{k_2}(y)}\text{ for any $k_1<k_2$}.\]
							So for $k$ large enough, $\supp(f\circ s_k)$ is compact and decreases in $k$. Further using Proposition \ref{hardy}, we have for above $k$, the 
							$\EE_1$-norms of  $f\circ s_k$ are uniformly bounded. Therefore, it reduces to proving $f\circ s_k\in \GG$ for large $k$.
							
							We assume $\supp(f\circ s)\subset [-N,N]$ for any above $k$. Fix $k$ large enough such that $s_k(N)<s_k(\infty)$ and $s_k(-N)>s_k(-\infty)$, i.e. $\d s_k$ assigns positive masses on both $(-\infty,-N)$ and $(N,\infty)$. We denote $M=\inf\{x:s_k(x)=s_k(\infty)\}$, which is finite and greater than $N$. 
							
							In the following, we shall show that $\GG$ contains a special class of ladder-like functions. Roughly speaking, the functions have a constant stretch beyond the maximum and their `left arms' are $h^l_n$ or $h^r_n$. Before the precise statement, we introduce some notation. Let $h$ and $p$ be two ladder-like functions. We say $h$ and $p$ are compatible if 
							$\norm{h}_\infty=\norm{p}_\infty$, $\supp(h)\cap \supp(p)=\emptyset$. For compatible $h$ and $p$, assuming $T^h<T^p$ and $\supp(p)=[a,b]$, the functions $A^l(h,p)$ and $A^r(h,p)$ are defined as:
							\begin{align*}
								A^l(h,p)(x)&=\begin{cases}
									h^l(x),\ & x\in(-\infty,T^p];\\
									p(x),\ & x\in (T^p,\infty),
								\end{cases}\\
								A^r(h,p)(x)&=\begin{cases}
									h^r(x),\ & x\in(-\infty,a];\\
									\norm{p}_\infty-p(x),\ & x\in (a,T^p];\\
									0,\ &(T^p,\infty).
								\end{cases}
							\end{align*}
							Namely, $A^l(h,p)$ and $A^r(h,p)$ are ladder-like functions. The `arms' of $A^l(h,p)$ consist of the `left arm' of $h$ and the `right arm' of $p$, while the `arms' of $A^r(h,p)$ consist of the reflections of the `right arm' of $h$ and the `left arm' of $p$. Let
							\[E:=\big\{p\in C_c: p\text{ is ladder-like, }\supp(p)\subset [M+1,M+2]\big\}.\]
							We claim that for any $n\ge1$, there exists $p_n\in E$ such that $p_n$ and $h_n$ are compatible and both $A^l_n:=A^l(h_n,p_n)$ and $A^r_n=A^r(h_n,p_n)$
							are in $\GG$. If it is proved, then we have
							\[q_k:=\sum_{n=1}^k 2^{-n} \dfrac{A^l_n +A^r_n}{\norm{h^l_n}_{\mathcal{E}}+\norm{h^r_n}_{\mathcal{E}}+\norm{h_n}_\infty}\in \GG.\]
							Note that $q_k=s_k$ on $(-\infty,M+1]$. On $(M+1,\infty)$, $s_k$ is constant while $q_k$ decreases to $0$ and stays at $0$ afterwards. This implies that $f\circ s_k\in \GG$. Indeed, since $f\circ q_k$ is a contraction of $\norm{f^\prime}_\infty q_k$, it is also in $\GG$. Moreover, it holds that 
							\begin{align*}
								f\circ q_k=f\circ s_k\text{ on }(-\infty,M)\supset\supp(f\circ s_k),\quad f\circ q_k=0\text{ on }(M,M+1).
							\end{align*}
							In view of this,
							we take  $h\in \GG \cap C_c$ such that $h(x)\in (0,1)$, $h(x)=1$ on $\supp(f\circ s_k)$, and $h(x)=0$ on $(M+1,\infty)$. It follows that $f\circ s_k=h\cdot (f\circ q_k)\in \GG$.
							
							Now we prove the claim. 
							Let $p_n$ be a function in  $E\cap\GG$ with $\norm{p_n}_\infty=\norm{h_n}_\infty$. The regularity of $\GG$ and the ladder decomposition ensure the existence of such functions. Since $A^l_n, A^r_n\in H^{\alpha/2}$, it suffices to prove for any $\varepsilon>0$, 
							\begin{enumerate}[(1)]
								\item\label{GG1} $A^l_n\wedge (\norm{h_n}_\infty-\varepsilon)\in \GG$.
								\item\label{GG2} $(A^r_n)_\varepsilon\in \GG$, where $(\cdot)_\varepsilon$ is the $\varepsilon$-contraction defined in Section \ref{presection}.
							\end{enumerate}
							
							For \eqref{GG1}, it is easy to see that there exists $g\in \GG\cap C_c$ such that $\supp(g)\subset (T^{h_n},T^{p_n})$ and $g+h_n+p_n\ge \norm{h}_\infty-\varepsilon$ on $(T^{h_n},T^{p_n})$. So 
							\[A^l_n\wedge (\norm{h_n}_\infty-\varepsilon)=(g+h_n+p_n)\wedge (\norm{h_n}_\infty-\varepsilon)\in \GG.\]
							
							For \eqref{GG2}, we take $g\in \GG\cap C_c$ such that $g=\norm{h_n}_\infty$ on $[T^{h_n},T^{p_n}]$ and $g\ge h_n+p_n$. Then $g-h_n-p_n=A^r_n$ on $\supp(A^r_n)$. Note that $g-h_n-p_n=0$ on $\partial \supp(A^r_n)$. It follows that $(g-h_n-p_n)_\varepsilon=0$ on a neighbourhood $\partial \supp(A^r_n)$. So we can `discard' the part of $(g-h_n-p_n)_\varepsilon$ in $\supp(A^r_n)^c$ by multiplying a function in $\GG\cap C_c$. That deduces \eqref{GG2} and completes the whole proof.

							\section{Proof of Theorem \ref{char2}}\label{charsection}
							We have seen that for $1< \alpha\le 2$ and $s\in \bS^{\alpha/2}_\loc$, $\overline{\cF_s}$ is a regular subspace of $H^{\alpha/2}$. In this section, we shall further explore the problem: for what kind of $s$, $\overline{\cF_s}$ is proper? To begin with, we recall some known results of the extended Dirichlet space of $H^{\alpha/2}$.
							
							For $0<\alpha\le 2$, let $H^{\alpha/2}_e$ be the extended Dirichlet space of $H^{\alpha/2}$. When $0<\alpha<1$, $H^{\alpha/2}_e$ is Hilbert due to the transience of $H^{\alpha/2}$. When $1\le \alpha<2$, it has the following characterization (Cf. \cite[\S6.5]{chen2012symmetric}):
							\begin{align*}
								\left\{
								\begin{aligned}
									H^{\alpha/2}_e&=\left\{f \in L^{2}_{loc}(\IR):\int_{\mathbb{R}^2}\dfrac{(f(x)-f(y))^2}{|x-y|^{1+\alpha}}\d x\d y<\infty \right\};\\
									\EE(f,f)&=\int_{\mathbb{R}^2}\dfrac{(f(x)-f(y))^2}{|x-y|^{1+\alpha}}\d x\d y. 
								\end{aligned}\right.
							\end{align*}
							The quotient space of $H^{\alpha/2}_e$
							by the space of constant functions $\mathcal{N}$, denoted by $\mathring{H}^{\alpha/2}_e=H^{\alpha/2}_e/\mathcal{N}$, is a Hilbert space.
							
							In the following, we assume $\alpha\in (1,2)$ and prove Theorem \ref{char2}, where we exclude the case $\alpha=2$ which has been known. Recall that we denote $\alpha^*=2-\alpha$ and assume all the quasi notions are relative to $(H^{\alpha^*/2},\EE^{\alpha^*/2})$ henceforth.

								For any $g\in H^{\alpha/2}$, we define $Dg$ to be the derivative of $g$ in the sense of distribution, i.e. for any $\phi\in C^\infty_c$, $Dg(\phi)=-\int_{\IR} g\phi^\prime\d x$. Then we have
								\begin{align}\label{ddg}
									\abs{Dg(\phi)}=\abs{\big(g,\phi^\prime\big)_{L^2(\IR)}}=\abs{\big((-\Delta)^{\alpha/4}g,(-\Delta)^{\alpha^*/4}\phi\big)_{L^2(\IR)}}\le \norm{g}_{\EE^{\alpha/2}}\norm{\phi}_{\EE^{\alpha^*/2}},        
								\end{align}
								where $\Delta$ is the Laplacian and it holds that $\norm{g}_{\EE^{\alpha/2}}=\norm{(-\Delta)^{\alpha/4}g}_{L^2(\IR)}$.  It follows from \eqref{ddg} that $Dg$ can be generalized to an element in $\big(H^{\alpha^*/2}_{e}\big)^*$ (the dual space of $H^{\alpha^*/2}_{e}$), which we still denote by $Dg$. Since $(-\Delta)^{\alpha^*/4}$ is self-adjoint and injective, we have $(-\Delta)^{\alpha^*/4}(C^\infty_c)$ is dense in $L^2(\IR)$. Hence  the operator norm of $Dg$ equals $\norm{g}_{\EE^{\alpha/2}}$. Namely, $D$ is an isometry from $H^{\alpha/2}$ to $\big(H^{\alpha^*/2}_{e}\big)^*$, which can be further extended to an isometry from $\mathring{H}^{\alpha/2}_e$ to $\big(H^{\alpha^*/2}_{e}\big)^*$.  Now we take $g=f\circ s\in \cF_s$. Since $f\circ s$ is of BV, using the formula for integration by parts, we have
								\begin{align}\label{integral}
									D(f\circ s)(\phi)=\int_{\IR} \phi\,\d (f\circ s),\text{ for all $\phi\in C^\infty_c$}.
								\end{align}
								Combining with \eqref{ddg}, we have the measure $\d(f\circ s)$ is of finite energy integral relative to $\big(H^{\alpha^*/2},\EE^{\alpha^*/2}\big)$, which simply leads to the smoothness of $\d s$. In particular, it doesn't charge sets of zero capacity.
								Based on this, we can generalize \eqref{integral} as follows. Recall that $\widetilde{\phi}$ represents the quasi continuous version of $\phi$. 
								\begin{lemma}\label{qelem}
									For any $f\circ s\in \cF_s$,
									\begin{align}\label{generalize}
										D(f\circ s)(\phi)=\int_{\IR} \widetilde{\phi}\,\d (f\circ s),\text{ for all $\phi\in H^{\alpha^*/2}_e$}.
									\end{align}
								\end{lemma}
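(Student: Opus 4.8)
The plan is to show that both sides of \eqref{generalize} define $\EE^{\alpha^*/2}$-continuous linear functionals on $H^{\alpha^*/2}_e$ and that they coincide on the dense subspace $C^\infty_c$; the equality then propagates to all of $H^{\alpha^*/2}_e$ by continuity and density. Recall that since $\alpha^*=2-\alpha\in(0,1)$, the form $\EE^{\alpha^*/2}$ is transient, so $\big(H^{\alpha^*/2}_e,\EE^{\alpha^*/2}\big)$ is itself a Hilbert space in which $C^\infty_c$ is dense. The left-hand side causes no trouble: by the discussion preceding the lemma, \eqref{ddg} makes $D(f\circ s)$ a bounded functional, i.e. an element of $\big(H^{\alpha^*/2}_e\big)^*$, so $\phi\mapsto D(f\circ s)(\phi)$ is $\EE^{\alpha^*/2}$-continuous, and by \eqref{integral} it agrees with $\phi\mapsto\int_\IR\phi\,\d(f\circ s)$ whenever $\phi\in C^\infty_c$.

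The real content of the lemma is therefore to make sense of the right-hand side for a merely quasi continuous $\widetilde\phi$ and to show it extends the concrete integral continuously. Here I would invoke the theory of measures of finite energy integral for the transient form $\EE^{\alpha^*/2}$. We already know $\d(f\circ s)$ is of finite energy integral and is smooth, hence charges no polar set; in particular, for $\phi\in C^\infty_c$ (where $\widetilde\phi=\phi$ q.e.) the integrals in \eqref{integral} and \eqref{generalize} coincide. Writing the signed measure through its Jordan decomposition $\d(f\circ s)=\mu^+-\mu^-$, I would check that each $\mu^\pm$ is again of finite energy integral: both are compactly supported and arise as distributional derivatives of bounded, monotone, eventually constant functions of the form $g\circ s$, whose $\EE^{\alpha/2}$-energy is finite by an argument like the one bounding $\EE(h^l_n,h^l_n)$ above via Proposition~\ref{hardy}. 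Each positive part then admits a potential $U\mu^\pm\in H^{\alpha^*/2}_e$ with $\EE^{\alpha^*/2}(U\mu^\pm,\phi)=\int_\IR\widetilde\phi\,\d\mu^\pm$ for all $\phi\in H^{\alpha^*/2}_e$, so that $\phi\mapsto\int_\IR\widetilde\phi\,\d(f\circ s)=\EE^{\alpha^*/2}(U\mu^+-U\mu^-,\phi)$ is well defined and $\EE^{\alpha^*/2}$-continuous.

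With both functionals continuous and equal on the dense set $C^\infty_c$, they agree on all of $H^{\alpha^*/2}_e$, which is \eqref{generalize}. If one prefers to avoid the potential representation, the same conclusion follows by direct approximation: given $\phi\in H^{\alpha^*/2}_e$, choose $\phi_n\in C^\infty_c$ with $\phi_n\to\phi$ in $\EE^{\alpha^*/2}$ and, along a subsequence, $\widetilde{\phi_n}\to\widetilde\phi$ q.e.; the finite-energy bound applied to the differences $\phi_n-\phi_m\in C^\infty_c$ shows $\int_\IR\phi_n\,\d(f\circ s)$ is Cauchy with limit $D(f\circ s)(\phi)$, while the q.e. convergence together with the uniform control coming from finite energy identifies this limit as $\int_\IR\widetilde\phi\,\d(f\circ s)$.

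I expect the main obstacle to be precisely this last identification, namely justifying the interchange of limit and integral $\int_\IR\widetilde{\phi_n}\,\d(f\circ s)\to\int_\IR\widetilde\phi\,\d(f\circ s)$. The q.e. convergence of $\widetilde{\phi_n}$ alone does not yield it; the passage rests on the finite-energy property of $\d(f\circ s)$ (equivalently, on the existence of the potentials $U\mu^\pm$), and handling the signed measure cleanly requires first confirming that its Jordan parts individually belong to the class of measures of finite energy integral. Everything else reduces to routine density and continuity arguments in the Hilbert space $H^{\alpha^*/2}_e$.
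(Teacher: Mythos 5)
Your proposal is correct in substance but takes a genuinely different route from the paper's. The paper's proof is short and elementary: it first establishes \eqref{generalize} for \emph{bounded} $\phi\in H^{\alpha^*/2}_e$ by choosing the approximating sequence $\{\phi_n\}\subset C^\infty_c$ to be \emph{uniformly bounded} in addition to being $\EE^{\alpha^*/2}$-convergent and q.e.\ convergent to $\widetilde{\phi}$; since $f\circ s$ has compact support and is Lipschitz with respect to $s$, the total variation measure of $\d(f\circ s)$ is finite and dominated by a multiple of the smooth measure $\d s$, hence charges no polar set, and dominated convergence then performs exactly the limit interchange you single out as the main obstacle. The general case follows by truncation. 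No Jordan decomposition and no potential theory enter. Your primary route --- representing $\phi\mapsto\int_{\IR}\widetilde{\phi}\,\d\mu^{\pm}$ as $\EE^{\alpha^*/2}(U\mu^{\pm},\cdot)$ and matching two $\EE^{\alpha^*/2}$-continuous functionals on the dense subspace $C^\infty_c$ of the Hilbert space $H^{\alpha^*/2}_e$ --- does work, but it rests on the one claim you only sketch: that each Jordan part $\mu^{\pm}$ is individually of finite $0$-order energy integral. That claim is true and can be proved along the lines you indicate (the positive and negative variation functions of $f\circ s$ are of the form $g\circ s$ with $g$ monotone, Lipschitz and eventually constant, whose $\EE^{\alpha/2}$-energy is finite by essentially the same computation the paper carries out for $h^l_n$; one also uses that a positive measure dominated by a measure of finite energy integral is itself of finite energy integral), but it is an extra layer of machinery that the uniformly-bounded-approximation-plus-truncation device avoids. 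Your fallback ``direct approximation'' paragraph is, as you yourself note, incomplete without one of these two inputs; the paper's uniform boundedness assumption is precisely the missing ingredient that makes the direct approach close.
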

								\begin{proof}
									For any bounded $\phi\in H^{\alpha^*/2}_e$, we take a sequence of uniformly bounded functions $\{\phi_n\}\subset C^\infty_c$ such that $\phi_n$ converges to $\phi$ in $\EE^{\alpha^*/2}$ and $\phi_n$ converges q.e. to $\widetilde{\phi}$. Then by substituting $\phi_n$ in place of $\phi$ in \eqref{integral} and letting $n\uparrow\infty$, we reach \eqref{generalize} for bounded $\phi$. The further genralization to any $\phi\in H^{\alpha^*/2}_e$ is obtained by truncation.
								\end{proof}
								
								
								
							Now let us prove \eqref{proper4}$\Rightarrow$\eqref{proper1}. Suppose $f\in H^{\alpha^*/2}$ is nonzero and $\langle\widetilde{f},\d s\rangle=0$. Then by slightly extending \eqref{ddg}, we have for any $\phi\circ s\in \cF_s$, 
							\begin{align*}
								\abs{\langle\widetilde{f},\d(\phi \circ s)\rangle}=\abs{\big((-\Delta)^{\alpha/4}\phi\circ s  ,(-\Delta)^{\alpha^*/4}f \big)_{L^2(\IR)}}= \abs{\big(\phi\circ s  ,(-\Delta)^{1/2-\alpha/2}f \big)_{\mathcal{E}^{\alpha/2}}} =0,
							\end{align*}
							which deduces $\cF_s$ is not dense in $H^{\alpha/2}$, since otherwise we can approximate $(-\Delta)^{1/2-\alpha/2}f\in H^{\alpha/2}$ by elements in $\cF_s$, which implies  \[\sup_{\phi\circ s\in \cF_s}\abs{\big(\phi\circ s  ,(-\Delta)^{1/2-\alpha/2}f \big)_{\mathcal{E}^{\alpha/2}}}>0.\]
							
							
							
							

							For the proof of \eqref{proper1}$\Rightarrow$\eqref{proper4}, we need another lemma. Recall that $\mathcal{N}$ is the space of constant functions.
							\begin{lemma}\label{extended-close}
								Suppose $\overline{\FF_s}$ is a proper regular subspace of $H^{\alpha/2}$. Let $\GG$ be the extended Dirichlet space of $\overline{\FF_s}$ and $\mathring{\GG}:=(\GG+\mathcal{N})/\mathcal{N}$. Then $\mathring{\GG}$ is a proper closed subspace of $\mathring{H}^{\alpha/2}_e$.
							\end{lemma}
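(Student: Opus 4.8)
The plan is to carry the whole argument over to the extended Dirichlet spaces, where two standard facts do the work: $\FF=\FF_e\cap L^2$ for any Dirichlet form, and the completeness of extended Dirichlet spaces modulo their null space. First I would record the basic compatibility: since $\overline{\FF_s}\subset H^{\alpha/2}$ and the two forms agree on $\overline{\FF_s}$, any $\EE$-Cauchy sequence in $\overline{\FF_s}$ with an a.e.\ limit is also an admissible approximating sequence for $H^{\alpha/2}$. Hence $\GG=(\overline{\FF_s})_e\subset H^{\alpha/2}_e$, the inclusion being isometric for the form $\EE$. Consequently $\mathring{\GG}=(\GG+\mathcal{N})/\mathcal{N}$ is a linear subspace of the Hilbert space $\mathring{H}^{\alpha/2}_e$ carrying the restriction of the $\EE$-inner product, and it remains to prove it is closed and proper.

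For closedness I would invoke that the extended Dirichlet space of a recurrent regular form, quotiented by its $\EE$-null space, is again a Hilbert space. Since $H^{\alpha/2}$ is recurrent for $1<\alpha<2$, its null space is exactly $\mathcal{N}$; granting the recurrence of $\overline{\FF_s}$ (discussed below), $\mathring{\GG}=\GG/\mathcal{N}$ is complete for $\EE$. As the embedding $\mathring{\GG}\hookrightarrow\mathring{H}^{\alpha/2}_e$ is isometric, Cauchy sequences and their limits are preserved, so $\mathring{\GG}$ is a complete, hence closed, subspace of $\mathring{H}^{\alpha/2}_e$.

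The properness is the substantive point, and I would argue the contrapositive: if $\mathring{\GG}=\mathring{H}^{\alpha/2}_e$ then $\overline{\FF_s}=H^{\alpha/2}$. The hypothesis means $\GG+\mathcal{N}=H^{\alpha/2}_e+\mathcal{N}=H^{\alpha/2}_e$, the last equality because $\mathcal{N}\subset H^{\alpha/2}_e$ by recurrence of $H^{\alpha/2}$. Using the recurrence of $\overline{\FF_s}$ to get $\mathcal{N}\subset\GG$, this upgrades to the honest equality $\GG=H^{\alpha/2}_e$. Intersecting with $L^2$ and applying $\FF=\FF_e\cap L^2$ to both forms then gives $\overline{\FF_s}=\GG\cap L^2=H^{\alpha/2}_e\cap L^2=H^{\alpha/2}$, contradicting that $\overline{\FF_s}$ is proper.

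The step I expect to be the main obstacle is precisely the recurrence of $\overline{\FF_s}$, since it is what promotes an equality modulo constants to a genuine equality of extended spaces and also supplies the null space used for closedness. I would establish it by exploiting the freedom in Theorem \ref{char1}: the given regular subspace equals $\overline{\cF_{s}}$ for the scale function built in that proof, which satisfies $\norm{s}_{\EE}\le 2$ and has bounded range $s(\IR)$. Choosing $\phi_n\in\bL_s$ with $\phi_n\uparrow 1$ on $s(\IR)$ and tapering gently to $0$ near the two endpoints of $s(\IR)$ yields $u_n=\phi_n\circ s\in\overline{\FF_s}$ with $u_n\uparrow 1$ a.e.; the finiteness of $\EE(s)$, together with a judicious choice of the tapering widths, forces $\EE(u_n,u_n)\to 0$. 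This gives $\mathbf{1}\in\GG$ with $\EE(\mathbf{1},\mathbf{1})=0$, i.e.\ $\mathcal{N}\subset\GG$, which is exactly the recurrence invoked in both parts above.
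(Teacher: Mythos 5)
There is a genuine gap: your whole argument is routed through the claim that $\overline{\FF_s}$ is recurrent, and that claim is neither proved by your sketch nor available in general. The paper does not assume recurrence; it observes that $\overline{\cF_s}$ is irreducible, hence either recurrent or transient, and treats the two cases separately. Your proposal silently discards the transient case. In that case the paper's properness argument is quite different from yours: assuming $\mathring{\GG}=\mathring{H}^{\alpha/2}_e$, every non-negative $f\in C^\infty_c$ admits a constant $c(f)$ with $f+c(f)\in\GG$, and each possibility $c=0$, $c>0$, $c<0$ is killed by a normal-contraction trick ($c=c\wedge(c+f)$, resp.\ $c=c\vee(g+f+c)$) that would force a nonzero constant into $\GG$, contradicting transience. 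Nothing in your write-up substitutes for this.

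Your proposed proof of recurrence also does not work as stated. The scale function $s$ built in the proof of Theorem \ref{char1} has \emph{bounded} range, so every $\phi_n\in\bL_s$ must vanish near the two endpoints of the bounded interval $s(\IR)$ while rising to $1$ in between; its Lipschitz constant is therefore bounded below by $1/|s(\IR)|$, and the only estimate your setup provides, $\EE(\phi_n\circ s,\phi_n\circ s)\le \mathrm{Lip}(\phi_n)^2\,\EE(s,s)$, stays bounded away from $0$ no matter how the tapering widths are chosen (there is no room to widen them). So the ``judicious choice'' step is not just unjustified but cannot be repaired within this scheme; if it could, it would prove that \emph{every} regular subspace of $H^{\alpha/2}$ is recurrent, a statement the authors conspicuously avoid. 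A secondary, smaller issue: in the recurrent case you assert closedness by citing completeness of $\GG_e/\mathcal{N}$; the paper instead proves closedness directly, extracting a.e.\ convergent subsequences and normalizing at a point $x_0$ using $\mathcal{N}\subset\GG$, then diagonalizing over approximating sequences from $\overline{\cF_s}$. If you want to invoke the Hilbert-space structure of the quotient of a recurrent extended Dirichlet space, you should at least supply a reference or an argument, since it is exactly the point the authors chose to verify by hand.
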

							\begin{proof}
								It is easy to see that $\overline{\cF_s}$ is irreducible. So it is either recurrent or transient. The proof proceeds separately in these two cases.
								
								If $\overline{\cF_s}$ is recurrent, then $\mathcal{N}\subset \GG$, which implies $\mathring{\GG}$ is proper, since otherwise we have $\GG=H^{\alpha/2}_e$ and \[\overline{\cF_s}=\GG\cap L^2(\IR)=H^{\alpha/2}_e\cap L^2(\IR)=H^{\alpha/2}.\] 
								Now let us show $\mathring{\GG}$ is closed. Recall that $\mathring{H}^{\alpha/2}_e$ is Hilbert. It suffices to prove that if $\{f_n\}\subset \GG$ converges in $\EE$-norm to $f\in H^{\alpha/2}_e$, then $f\in \GG$. Note that the convergence is equivalent to that $g_n(x,y):=f_n(x)-f_n(y)$ converges to $g(x,y):=f(x)-f(y)$ in $L^2(\mu)$, where $\mu:=\abs{x-y}^{-(1+\alpha)}\d x\d y$. It follows that $g_n(x,y)\rightarrow g_n(x,y)$ for a.e. $(x,y)$ (or a subsequence at least). We can find some $x_0$ such that $g_n(x_0,y)\rightarrow g(x_0,y)$ for a.e. $y$. Since $\mathcal{N}\subset \GG$,  without loss of generality, we assume $f_n(x_0)=f(x_0)$ for all $n$. Then $f_n(x)\rightarrow f(x)$ a.e.. 
								Finally, we consider any approximating sequence $\big\{f^{(m)}_n:m\ge 1\big\}\subset \overline{\cF_s}$ of $f_n$. We can find a sequence $\big\{f^{(m_n)}_n:n\ge 1\big\}$ and $x_1\in \IR$ such that $f^{(m_n)}_n\rightarrow f$ in $\EE$-norm and $f^{(m_n)}_n(x_1)\rightarrow f(x_1)$. Using the same arguments as before, we have $f^{(m_n)}_n(x)\rightarrow f(x)$ a.e.. Hence $f\in \GG$. 
								
								If $\overline{\cF_s}$ is transient, then $\GG$ contains no nonzero constant functions and $\GG$,  $\mathring{\GG}$ are both Hilbert. It remains to show $\mathring{\GG}$ is proper. The proof goes by contradiction. Suppose $\mathring{\GG}=\mathring{H}^{\alpha/2}_e$. Then for each non-negative $f\in C^\infty_c$, there exists a unique constant $c=c(f)$ such that $f+c\in \GG$. We shall verify that any possible case leads to a contradiction.
								\begin{itemize}
									\item If $c=0$ for all $f$, then $\overline{\cF_s}=\GG\cap L^2(\IR)$ contains all non-negative functions in $C^\infty_c$. So $\overline{\cF_s}=H^{\alpha/2}$, which contradicts.
									\item If $c>0$ for some $f$, then $c=c\wedge (c+f)\in \GG$, which contradicts the transience.
									\item If $c<0$ for some $f$, we take $g\in \overline{\cF_s}\cap C_c$ such that $g+f\le 0$. Then $c=c\vee (g+f+c)\in\GG$. Again, it contradicts the transience.
								\end{itemize}
								Therefore, it follows that $\mathring{\GG}$ is proper.
							\end{proof}

						From the Lemma \ref{extended-close}, there exists a non-zero $g\in \mathring{H}^{\alpha/2}_e$ such that $g\perp \FF_s$ in $\EE$-norm. Since $D$ is isometric, it also holds that
						\[Dg\perp \{\d(f\circ s):f\circ s\in \cF_s\}\text{ in }\big(H^{\alpha^*/2}_e\big)^*.\]
						In particular, since $g$ is non-constant, $Dg$ is also nonzero. 
						Thus, by Hahn-Banach theorem and Lemma \ref{qelem}, there exists a nonzero $h\in H^{\alpha^*/2}_e$ such that $\int_{\IR} \widetilde{h}\,d(f\circ s)=0$ for all $f\circ s\in \FF_s$. Or equivalently, $\langle \widetilde{h},\d s\rangle=0$. We further take any $\phi\in C^\infty_c$ and set $q:=(h\wedge 1)\cdot \phi$. Then $q\in H^{\alpha^*/2}_e\cap L^2(\IR)= H^{\alpha^*/2}$. It is nonzero and satisfies  $\langle \widetilde{q},\d s\rangle=0$.
						
						Next, we shall further prove the equivalence of \eqref{proper2}$\sim$\eqref{proper4}.
						For this, we first present an equivalent condition for $\Cap^{\alpha^*/2}$ to be  concentrated on a Borel set $E$. 
						
						\begin{theorem}[{\cite[Theorem 11.3.2]{adams1999function}}]\label{adams}
							Let $0<\alpha^*\le1$ and $E$ be a Borel set. The following are equivalent:
							\begin{enumerate}[(1)]
								\item\label{adams1} $\Cap^{\alpha^*/2}$ is not concentrated on $E$;
								\item\label{adams2} there exists a nonzero $f\in H^{\alpha^*/2}$ such that $\widetilde{f}(x)=0$ q.e. on $E$.
							\end{enumerate}
						\end{theorem}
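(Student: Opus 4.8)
The plan is to recast both conditions inside the Hilbert space $H^{\alpha^*/2}$ equipped with the norm $\norm{\cdot}_{\EE_1}$ and to argue by potential theory relative to the (transient-or-critical) form $(H^{\alpha^*/2},\EE^{\alpha^*/2})$. The central object is the closed subspace
\[
L_E:=\big\{f\in H^{\alpha^*/2}:\widetilde{f}=0\text{ q.e. on }E\big\},
\]
so that assertion \eqref{adams2} is precisely $L_E\neq\{0\}$. Closedness of $L_E$ is immediate, since $\EE_1$-convergence yields q.e. convergence of a subsequence of quasi-continuous versions. Throughout I would use the $1$-capacity and the $1$-potentials $U_1\nu$ of finite-energy measures, so that the same argument covers $\alpha^*=1$; the basic tool is the identity $\EE_1(U_1\nu,v)=\langle\widetilde{v},\nu\rangle$ together with the fact that finite-energy measures charge no polar set.

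For the implication \eqref{adams2}$\Rightarrow$\eqref{adams1} I would argue by contraposition and first upgrade ``concentrated on open sets'' to ``concentrated on quasi-open sets.'' Given a quasi-open $V$, choose open sets $G_n$ with $\Cap^{\alpha^*/2}(G_n)\to0$ and $V\cup G_n$ open; applying concentration to $W_n:=V\cup G_n$ and passing to the limit via monotonicity and subadditivity of capacity gives $\Cap^{\alpha^*/2}(E\cap V)=\Cap^{\alpha^*/2}(V)$. Now take a nonzero $f\in L_E$. The set $V:=\{\widetilde{f}\neq0\}$ is quasi-open with $\Cap^{\alpha^*/2}(V)>0$ (else $f=0$), while $\widetilde{f}=0$ q.e. on $E$ forces $V\cap E$ to be polar. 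Concentration applied to the quasi-open $V$ would then give $\Cap^{\alpha^*/2}(V)=\Cap^{\alpha^*/2}(E\cap V)=0$, a contradiction; hence $\Cap^{\alpha^*/2}$ is not concentrated on $E$.

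For \eqref{adams1}$\Rightarrow$\eqref{adams2} I would use the orthogonal decomposition built on the representation $L_E^\perp=\overline{\{U_1\nu:\nu\text{ is a finite-energy measure concentrated on }E\}}$, which follows from the identity above (a potential $U_1\nu$ annihilates $L_E$ exactly when $\nu$ does not charge $E^c$). Let $U$ be the witnessing open set with $\Cap^{\alpha^*/2}(E\cap U)<\Cap^{\alpha^*/2}(U)$, and let $p_U$, $p_{E\cap U}$ be the corresponding capacitary potentials, with capacitary measures $\mu_U$, $\mu_{E\cap U}$. Projecting $p_U$ off $L_E^\perp$ produces $f:=p_U-\mathrm{Proj}_{L_E^\perp}p_U\in L_E$ automatically, so it remains only to show $f\neq0$, i.e. $p_U\notin L_E^\perp$, i.e. $\mu_U$ is not concentrated on $E$. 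If it were, then since $\widetilde{p_U}=1$ q.e. on $E\cap U\supset\supp(\mu_{E\cap U})$ one gets $\EE_1(p_U,p_{E\cap U})=\langle\widetilde{p_U},\mu_{E\cap U}\rangle=\Cap^{\alpha^*/2}(E\cap U)$, while $\widetilde{p_{E\cap U}}=1$ q.e. on $E\cap U\supset\supp(\mu_U)$ gives $\EE_1(p_U,p_{E\cap U})=\langle\widetilde{p_{E\cap U}},\mu_U\rangle=\mu_U(\IR)=\Cap^{\alpha^*/2}(U)$; hence $\Cap^{\alpha^*/2}(E\cap U)=\Cap^{\alpha^*/2}(U)$, contradicting the strict inequality. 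Thus $f$ is a nonzero element of $L_E$.

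The main obstacle is the capacitary-measure bookkeeping in the second implication: one must know that $\mu_U$ is supported inside $U$ on the set $\{\widetilde{p_U}=1\}$ and that, under the ``concentrated on $E$'' hypothesis, $\widetilde{p_{E\cap U}}=1$ holds $\mu_U$-a.e., so that the two energy evaluations of $\EE_1(p_U,p_{E\cap U})$ match up. These are standard facts for the nonlocal potential theory attached to $(H^{\alpha^*/2},\EE^{\alpha^*/2})$, but they are exactly the place where the quasi-topology, the support properties of equilibrium measures, and the finite-energy-measure calculus must be handled carefully; the rest of the argument is soft Hilbert-space geometry.
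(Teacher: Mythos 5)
Your implication \eqref{adams2}$\Rightarrow$\eqref{adams1} is correct and takes a genuinely different route from the paper: you upgrade concentration from open sets to quasi-open sets and apply it to the quasi-open set $\{\widetilde{f}\neq 0\}$, which is a purely capacity-theoretic argument, whereas the paper argues probabilistically that under concentration q.e.\ every point of an open $U$ is regular for $E\cap U$, forcing $\widetilde{f}=0$ q.e.\ on $U$. That half is fine.

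The gap is in \eqref{adams1}$\Rightarrow$\eqref{adams2}, at the step ``$f\neq 0$, i.e.\ $p_U\notin L_E^\perp$, i.e.\ $\mu_U$ is not concentrated on $E$.'' The second ``i.e.''\ is not an equivalence. The easy direction is that $\nu(E^c)=0$ implies $U_1\nu\in L_E^\perp$ (finite-energy measures charge no polar set); the direction you actually need is that $\mu_U$ charging $E^c$ forces $p_U\notin L_E^\perp$, and this cannot be extracted from your setup. Indeed, if $L_E=\{0\}$ --- the very situation you must exclude --- then $L_E^\perp$ is the whole space and $p_U\in L_E^\perp$ no matter where $\mu_U$ sits, so any proof of ``$\mu_U$ not concentrated on $E$ $\Rightarrow$ $p_U\notin L_E^\perp$'' already presupposes $L_E\neq\{0\}$: the argument is circular. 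The identity $L_E^\perp=\overline{\{U_1\nu:\nu(E^c)=0\}}$ does not rescue it, because $p_U$ lying in that closed span does not force its own representing measure to avoid $E^c$ (limits of potentials of measures concentrated on $E$ need not be such potentials, and weak limits of measures only control open sets while $E^c$ is merely Borel); your one-line justification of the identity (``a potential annihilates $L_E$ exactly when $\nu$ does not charge $E^c$'') fails in the ``only if'' direction for the same reason. Your energy computation showing that $\mu_U$ cannot be concentrated on $E$ is sound in substance (modulo the boundary issue you flag: $\mu_U$ may charge $\partial U$, where $\widetilde{p_{E\cap U}}$ need not equal $1$), but it proves a statement about $\mu_U$, not about $L_E$. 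The paper closes this direction constructively: take the equilibrium potential $u$ of $E\cap U$, observe that $\widetilde{u}<1$ on a subset of $U\setminus E$ of positive Lebesgue measure (otherwise $\Cap^{\alpha^*/2}(U)\le\EE_1(u,u)=\Cap^{\alpha^*/2}(E\cap U)$), and exhibit the explicit nonzero element $\varphi\cdot(1-u)\in L_E$ with $\varphi\in C^\infty_c$, $\varphi=1$ on a suitable compact subset of $U$ and $\supp(\varphi)\subset U$. Replacing your projection step by this construction repairs the proof.
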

						\begin{proof}
							This theorem is proved in \cite[Theorem 11.3.2]{adams1999function}, where it is stated in terms of the Bessel $(\alpha^*,2)$-capacity which is equivalent to $\Cap^{\alpha^*/2}$.
							For the sake of completeness, we supply here an alternative proof with a probabilistic method. 
							We write $\Cap=\Cap^{\alpha^*/2}$ in this proof.
							
							\eqref{adams1}$\Rightarrow$\eqref{adams2}: Suppose there exists an open set $U$ such that $\Cap(E\cap U)<\Cap(U)$. Let $u\in H^{\alpha^*/2}$ be the capacitary function for $E\cap U$, that is characterized by: $$\widetilde{u}(x)= 1 \text{ q.e. on } E\cap U,\text{ and }\mathcal{E}_1(u,u)=\Cap(E \cap U).$$  It holds that $u(x)<1$ on a subset of $U\setminus E$ of positive Lebesgue measure, since otherwise $\Cap(U \setminus E)=\Cap(U)$. Then there exists a compact set $K\subset U$ such that 
							$K\cap \{u<1\}$ has positive Lebesgue measure. 
							We take any $\varphi\in C_c^\infty(\mathbb{R})$ with $\varphi(x)=1$ on $K$ and $\supp(\phi)\in U$. Then $f:=\varphi\cdot(1-u)=\varphi-\varphi\cdot u\in H^{\alpha^*/2}$ and it is nonzero. Moreover, $\widetilde{f}=\varphi\cdot(1-\widetilde{u})$  vanishes q.e. on $E$.

							\eqref{adams2}$\Rightarrow$\eqref{adams1}:
							Suppose $\Cap(E\cap U)=\Cap(U)$ for an open set $U$. Denote by $\tau_{E\cap U}$ and $\tau_U$ the hitting times of $E\cap U$ and $U$ by a symmetric stable process with parameter $\alpha^*$ respectively. Then $\mathbb{P}_x(e^{-\tau_{E\cap U}})$ (resp. $\mathbb{P}_x(e^{-\tau_U})$) is a quasi-continuous version of the capacitary function for $E\cap U$ (resp. $U$).
							We have $\mathbb{P}_x(e^{-\tau_{E\cap U}})=\mathbb{P}_x(e^{-\tau_U})$ for q.e. $x$, since otherwise  $\Cap(E\cap U) < \Cap(U)$. It follows that for q.e. $x\in U$ (indeed, for every $x$ considering the transition probability has a density), $x$ is a regular point of $E\cap U$. Therefore, if $f \in H^{\alpha^*/2}$ with $\widetilde{f}(x)=0$ q.e. on $E\cap U$, it also vanishes q.e. at the regular points of $E\cap U$ due to the quasi continuity. So $\widetilde{f}(x)=0$ q.e. on $U$. That immediately leads to the conclusion. 
						\end{proof}	
						Back to Theorem \ref{char2}, 
						\eqref{proper2}$\Leftrightarrow$\eqref{proper4} is immediate from Theorem \ref{adams} and the fact that for any non-negative quasi continuous function $\widetilde{f}\in H^{\alpha/2}$, $\langle \widetilde{f},\d s\rangle=0$ if and only if $\widetilde{f}=0$ q.e. on the quasi support of $\d s$. \eqref{proper2}$\Rightarrow$\eqref{proper3} is obtained by noting that the  quasi support of $\d s$ is a quasi closed set, whose capacity can be approximated by a descending sequence of open sets.
						For \eqref{proper3}$\Rightarrow$\eqref{proper4}, suppose $E$ satisfies the conditions in \eqref{proper3}. By Theorem \ref{adams}, there exists a nonzero $f\in H^{\alpha^*/2}$ such that $\widetilde{f}=0$ q.e. on $E$. Then 
						\[\langle \widetilde{f},\d s\rangle=\langle \widetilde{f}1_E,\d s\rangle+\langle \widetilde{f}1_{E^c},\d s\rangle=0.\]
						Namely, $f$ satisfies the conditions in \eqref{proper4}. That concludes the theorem.
						\begin{remark}\label{rmk}
							Almost all the conclusions of Theorem \ref{char1} and \ref{char2} can actually be generalized to the case $\alpha\in(0,1]$. The only exception is the second part of Theorem \ref{char1} may not hold in the case $\alpha=1$. The proofs are basically the same as that in Section \ref{charsect} and Section \ref{charsection}. We mention that for the proof of `\eqref{proper1}$\Rightarrow$\eqref{proper4}' in Theorem \ref{char2}, one needs to occaasionally consider $\mathring{H}^{\alpha^*/2}_e$ instead of $H^{\alpha^*/2}_e$. The following are some consequences of this generalization.
							\begin{itemize}
								\item For $\alpha\in(0,1)$,
								since all the functions in $H^{\alpha^*/2}$ is continuous, using \eqref{proper1}$\Leftrightarrow$\eqref{proper4} in Theorem \ref{char2}, we get $H^{\alpha/2}$ has no proper subspace, which is another proof the `only if' part of Theorem \ref{mainthm}. However, this proof relies on the duality of $H^{\alpha/2}$ and $H^{\alpha^*/2}$; while the proof in Section \ref{case1} can actually be generalized to the L\'evy processes with finite variation (see Section \ref{generalsec} for details);
								\item For $\alpha=1$, the corresponding versions of the first part of Theorem \ref{char1} and Theorem \ref{char2} allow us to construct the special proper regular subspaces of $H^{1/2}$, which will be presented in the next section.
							\end{itemize}
						\end{remark}


						\section{Proof of the `if' part of Theorem \ref{mainthm}}\label{ifpart}
						In this section, we will construct special proper regular subspaces to conclude the `if' part of Theorem \ref{mainthm}.

						Due to Theorem \ref{char2} and Remark \ref{rmk}, it suffices to find $s\in \bS^1_\loc$ satisfying \eqref{proper3} in Theorem \ref{char2}. We focus on the following type of $s$. Let $G$ be a dense open subset of $\IR$. Define
						\begin{align}\label{sss}
							s(x):=\int_{0}^{x}1_G(y)\d y,\ x\in\IR.
						\end{align}
						Then $s\in \bS^1_\loc$ and $\d s$ is concentrated on $G$.
						It is known (Cf. \cite[Chapter 5]{adams1999function}) that for any interval $I_r:=(-r,r)$, there exists $C,A,a>1$ such that 
						\begin{align*}
							\begin{cases}
								C^{-1}r^{\alpha-1}\le\Cap^{\alpha^*/2}(I_r)\le Cr^{\alpha-1},&\text{ for }1<\alpha<2;\\
								A^{-1}(\log(a/r))^{-1}\le  \Cap^{\alpha^*/2}(I_r)\le A(\log(a/r))^{-1},&\text{ for }\alpha=1.
							\end{cases}
						\end{align*}
						Hence for any $\alpha\in(1,2)$ (resp. $\alpha=1$), we can take $r_i$, $i\ge 1$, small enough, such that
						\[\sum_i C r_i^{\alpha-1} < \Cap^{\alpha^*/2}(I_1)\, (resp. \,\sum_i A(\log(a/r_i))^{-1}  < \Cap^{1/2}(I_1)). \]
						Define $G=G(\alpha)=[-1,1]^c\cup \bigcup_{i\ge 1} (x_i+I_{r_i})$, where $\{x_i\}_{i \ge 1}$ is a dense subset of $I_1$. Then $G$ is dense and
						$\Cap^{\alpha^*/2}(G\cap I_1)<\Cap^{\alpha^*/2}(I_1)$. It implies that the corresponding $s$ satisfies \eqref{proper3} in Theorem \ref{char2}. We have thus proved the theorem.

						\section{Generalizations to L\'evy processes}\label{generalsec}
						
						The characteristic exponent of a symmetric L\'evy process on $\IR$ is 
						$$\psi(\xi)=\frac{1}{2}\sigma \xi^2+\int_{\mathbb{R}}(1-\cos(\xi x))\nu(\d x),$$
						where $\sigma\geq 0$, $\nu$ is a symmetric Radon measure on $\IR\setminus\{0\}$ with integrability $$\int_{\mathbb{R}}(1\wedge x^2 )\nu(\d x)<\infty.$$
						It is known that a L\'evy process has finite variation, i.e.
						its sample path has bounded variation over any finite interval a.s., if and only if $\sigma=0$ and $\int_{\mathbb{R}}(1\wedge |x|)\nu(\d x)<\infty$.
						The latter is equivalent to for any $\lambda>0$,
						$$\int_{\IR} (\lambda \wedge |x|)\nu(\d x)<\infty.$$
						
						Assume $\sigma=0$ in the first part of this section and then the Dirichlet form of the process can be written as
						\begin{align*}
							\left\{\begin{aligned}
								&\mathcal{F}=\left\{f\in L^2(\mathbb{R}):\int_{\mathbb{R}}\big|\widehat{f}(\xi)\big|^2 \psi(\xi)d\xi<\infty\right\};\\
								&\mathcal{E}(f,g)=\frac{1}{2}\int_{\mathbb{R}^2}(f(x)-f(y))(g(x)-g(y))\nu(\d x-y)\d y=\int_{\mathbb{R}}\widehat{f}(\xi) \bar{\widehat{g}}(\xi)\psi(\xi)\d\xi.\end{aligned}\right.
						\end{align*}
						In particular, $C_c^{\infty}\cap \FF$ is a core of $(\EE,\FF)$.
						
						The following lemma, obtained by direct computation,
						generalizes Lemma \ref{lemma11}.
						\begin{lemma}
							For the indicator function $f=1_{[a,b]}$ with $a<b$, it holds that 
							$$\int_{\IR^2} (f(x)-f(y))^2\nu(\d x-y)\d y
							=2\int_{\IR} ((b-a)\wedge |x|)\nu(\d x).$$
							Hence $1_{[a,b]}\in \FF$ if and only if $\displaystyle{\int_{\IR} (1\wedge |x|)\nu(dx)<\infty}$.
						\end{lemma}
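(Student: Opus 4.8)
The plan is to collapse the double integral to a one-dimensional Lebesgue computation by exploiting the translation invariance of the jump kernel. For fixed $y$, the measure $\nu(\d x-y)$ is the image of $\nu$ under $z\mapsto y+z$, so substituting $z=x-y$ and interchanging the order of integration (legitimate by Tonelli, since the integrand is non-negative) gives
\[
\int_{\IR^2}\big(f(x)-f(y)\big)^2\,\nu(\d x-y)\,\d y
=\int_{\IR}\left(\int_{\IR}\big(f(y+z)-f(y)\big)^2\,\d y\right)\nu(\d z).
\]

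For each fixed $z$, the inner integral is a pure Lebesgue computation. Since $f=1_{[a,b]}$ takes only the values $0$ and $1$, we have $(f(y+z)-f(y))^2=\abs{1_{[a,b]}(y+z)-1_{[a,b]}(y)}$, so the inner integral is the Lebesgue measure of the symmetric difference of $[a,b]$ with its translate $[a-z,b-z]$. Two intervals of common length $b-a$ whose left endpoints differ by $\abs{z}$ overlap in length $(b-a-\abs z)^+$, so their symmetric difference has measure $2\big((b-a)\wedge\abs z\big)$. Substituting this back and renaming $z$ as $x$ yields exactly the claimed identity
\[
\int_{\IR^2}\big(f(x)-f(y)\big)^2\,\nu(\d x-y)\,\d y=2\int_{\IR}\big((b-a)\wedge\abs x\big)\,\nu(\d x).
\]

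For the membership criterion, I would note first that $1_{[a,b]}\in L^2(\IR)$ automatically, so $1_{[a,b]}\in\FF$ is equivalent to finiteness of $\EE(1_{[a,b]},1_{[a,b]})=\int_{\IR}\big((b-a)\wedge\abs x\big)\,\nu(\d x)$. The remaining step is to compare the integrand with $1\wedge\abs x$: on $\{\abs x\le b-a\}$ the two agree up to the fixed factor determined by $b-a$, while on $\{\abs x> b-a\}$ the integrand equals the constant $b-a$ and the tail mass $\nu(\{\abs x> b-a\})$ is finite by the standing assumption $\int_{\IR}(1\wedge x^2)\,\nu(\d x)<\infty$. Hence $(b-a)\wedge\abs x$ and $1\wedge\abs x$ are $\nu$-integrable together, and $1_{[a,b]}\in\FF$ if and only if $\int_{\IR}(1\wedge\abs x)\,\nu(\d x)<\infty$.

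There is no genuine obstacle here; the statement is a direct computation generalizing Lemma \ref{lemma11}. The only points demanding mild care are fixing the meaning of the shifted kernel $\nu(\d x-y)$ and justifying the interchange of integrals (both immediate from non-negativity), and separating the behaviour near the origin from the tail when comparing the two integrability conditions — the tail being controlled for free by the always-assumed truncation $\int_{\IR}(1\wedge x^2)\,\nu(\d x)<\infty$, so that the condition $\int_{\IR}(1\wedge\abs x)\,\nu(\d x)<\infty$ is effectively a constraint only on the small-jump behaviour.
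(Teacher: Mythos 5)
Your computation is correct and is exactly the ``direct computation'' the paper alludes to without writing out: translation invariance plus Tonelli reduces the double integral to the Lebesgue measure of the symmetric difference $[a,b]\,\triangle\,[a-z,b-z]$, which is $2\big((b-a)\wedge|z|\big)$, and the equivalence of $\int((b-a)\wedge|x|)\,\nu(\d x)<\infty$ with $\int(1\wedge|x|)\,\nu(\d x)<\infty$ follows from the standing assumption $\int(1\wedge x^2)\,\nu(\d x)<\infty$ controlling the tail, as the paper itself notes. No gaps.
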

						
						Then the following theorem is quite anticipated.
						
						\begin{theorem}
							Any 1-dim symmetric L\'evy process with finite variation has no proper regular subspaces.
						\end{theorem}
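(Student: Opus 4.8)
The plan is to follow the strategy of Section \ref{case1} at the level of structure, replacing the explicit stable kernel $\abs{x-y}^{-1-\alpha}$ by the general symmetric kernel $\nu(\d(x-y))$ and using the finite variation hypothesis $\int_\IR(1\wedge\abs{x})\,\nu(\d x)<\infty$ in place of the constraint $\alpha<1$. Let $\GG$ be a regular subspace of $(\EE,\FF)$; the goal is to show $\GG=\FF$. By the preceding lemma every indicator $1_{[a,b]}\in\FF$, so the step functions $S_0$ lie in $\FF$; once we know each $1_{[a,b]}\in\GG$, the linear span $S_0\subset\GG$, and since $\GG$ is $\EE_1$-closed it suffices to prove that $S_0$ is $\EE_1$-dense in $\FF$ (the analogue of Lemma \ref{S0}) together with $1_{[a,b]}\in\GG$ (the analogue of Lemma \ref{hn}).

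For the density of $S_0$, I would fix $f\in C_c^\infty\cap\FF$ (a core of $(\EE,\FF)$) and approximate it by its dyadic step function $f_n$ as in Lemma \ref{S0}. Writing the form, via the substitution $z=x-y$, in the translation-invariant way
\begin{align*}
\EE(u,u)=\frac{1}{2}\int_{\IR}\norm{u(\cdot+z)-u}_{L^2(\IR)}^2\,\nu(\d z),
\end{align*}
the problem reduces to controlling $\norm{g(\cdot+z)-g}_{L^2}^2$ for $g=f-f_n$. A direct estimate, splitting according to whether $\abs{z}$ is smaller or larger than the mesh $2^{-n}$, gives $\norm{g(\cdot+z)-g}_{L^2}^2\le C\,2^{-n}(1\wedge\abs{z})$ for $\abs{z}<1$, while for $\abs{z}\ge1$ one uses $\norm{g}_\infty\le c2^{-n}$ and compact support to get $\le C2^{-2n}$. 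Integrating against $\nu$ and invoking $\int_\IR(1\wedge\abs{z})\,\nu(\d z)<\infty$ (which also yields $\int_{\abs{z}\ge1}\nu(\d z)<\infty$) shows $\EE(f-f_n,f-f_n)\to0$, hence $S_0$ is dense.

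It then remains to prove $1_{[a,b]}\in\GG$, for which I would reuse the family $M_\rho$ and Lemma \ref{hn}. For part (i), regularity supplies $g\in\GG\cap C_c$ with $0\le g\le1$, $g=1$ on $[a,b]$ and $g=0$ off $(a-\rho,b+\rho)$; the Skorokhod-type construction \eqref{skorokhod} produces $g^*\in M_\rho$ which is an erased function of $g$. Since $g^*$ is monotone on each transition interval it has total variation $2$ and compact support, so by integration by parts $\abs{\widehat{g^*}(\xi)}\le 2/\abs{\xi}$ as in \eqref{keyinequality}; combined with the boundedness of $\psi$ near the origin and the frequency estimate below, this gives $g^*\in\FF$ directly, so the stable-specific Proposition \ref{hardy} is not needed. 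The second step of the proof of Lemma \ref{normcontract} is purely algebraic, using only the lattice structure, $\varepsilon$-contractions and the regularity of $\GG$ and not the form $\EE$, so it applies verbatim to conclude $g-g^*\in\GG$ and hence $g^*\in\GG$, giving $M_\rho\cap\GG\neq\emptyset$.

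For part (ii), the uniform $\EE_1$-bound over $M_\rho$ is where finite variation is essential, exactly as \eqref{keyinequality} was in Section \ref{case1}. Every $f\in M_\rho$ has total variation $2$, so $\abs{\widehat{f}(\xi)}\le 2/\abs{\xi}$ for $\abs{\xi}\ge1$, whence
\begin{align*}
\int_{\abs{\xi}\ge1}\abs{\widehat{f}(\xi)}^2\psi(\xi)\,\d\xi\le 4\int_{\abs{\xi}\ge1}\frac{\psi(\xi)}{\xi^2}\,\d\xi=4\int_{\IR}\Big(\int_{\abs{\xi}\ge1}\frac{1-\cos(\xi x)}{\xi^2}\,\d\xi\Big)\nu(\d x)\le C\int_{\IR}(1\wedge\abs{x})\,\nu(\d x)<\infty,
\end{align*}
using Fubini and the elementary bound $\int_{\abs{\xi}\ge1}(1-\cos\xi x)\xi^{-2}\,\d\xi\le C(1\wedge\abs{x})$. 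The low-frequency part is bounded uniformly since $\psi$ is bounded on $[-1,1]$ (by $C\int_\IR(1\wedge x^2)\,\nu(\d x)$) and $\norm{f}_{L^2}$ is uniformly controlled on $M_\rho$. With (i) and (ii) in hand, letting $\rho_n\downarrow0$ and combining the uniform bound with the pointwise convergence $h_{\rho_n}\to1_{[a,b]}$ yields $1_{[a,b]}\in\GG$, as in Section \ref{case1}. The main obstacle is precisely this finite-variation estimate: it forces $\int_{\abs{\xi}\ge1}\psi(\xi)\xi^{-2}\,\d\xi<\infty$ and thereby makes the whole scheme go through, mirroring the role of $\alpha<1$ for stable processes.
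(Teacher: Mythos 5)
Your proposal is correct and follows essentially the same route as the paper: reduce to the two analogues of Lemmas \ref{S0} and \ref{hn}, prove density of $S_0$ by the dyadic-mesh estimate (your translation-invariant bound on $\norm{g(\cdot+z)-g}_{L^2}^2$ split at $\abs{z}\approx 2^{-n}$ is just a repackaging of the paper's three-part domain splitting $I_1'+I_2'+I_3'$), and obtain the uniform $\EE_1$-bound on $M_\rho$ from the total-variation Fourier estimate combined with $\int_{\IR}(1\wedge\abs{x})\,\nu(\d x)<\infty$. Your observation that only Step 2 of Lemma \ref{normcontract} is needed once $h_\rho\in\FF$ is known directly is exactly the remark the paper makes at the end of its proof.
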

						\begin{proof}
							The proof of the `only if' part of Theorem \ref{mainthm} also works in this case, if we can check the analogs of Lemma \ref{S0} and \ref{hn} for the L\'evy process with finite variation. Therefore, we only need to verify the analogs of Lemma \ref{S0} and \ref{hn}, i.e.
							\begin{enumerate}[(i)]
								\item\label{levy1} $S_0$ is dense in $\mathcal{F}$;
								\item\label{levy2} with the same notations as in Section \ref{case1},  $h_\rho\in\mathcal{F}$ for $\rho>0$, and $\{\EE(h_\rho,h_{\rho}):\rho>0\}$ is bounded.
							\end{enumerate}
							
							For the proof of \eqref{levy1}, because it is known that $C_c^\infty(\IR)\cap\FF$ is
							dense in $\FF$ in $\EE_1$-norm, we need only to prove
							that $S_0$ is dense in $C^\infty_c\cap\FF$ in $\EE_1$-norm. 
							Take $f\in C_c^\infty\cap\FF$ with supp$(f)\subset (-N,N)$ and $|f'|\le c$ and $f_n$ is defined as in Lemma \ref{S0}. Write $g_n=f-f_n$, then $|g_n(x)|\le c/2^n$ for all
							$x\in\IR$. It suffices to show that
							$\EE(g_n,g_n)\longrightarrow 0$.
							
							The idea is similar to the proof of
							Lemma \ref{S0} with a slight difference.
							\begin{align*}
								I:=\int_{\mathbb{R}^2}((g_n)(x)-(g_n)(y))^2 \nu(\d x-y)\d y
							\end{align*}
							and we can also divide it into three parts.
							
							The first part is similar to the Lemma \ref{S0} and we have
							\begin{align*}
								I^\prime_1&=2\int_{[-N,N]\times (\mathbb{R}\backslash [-N,N]) }(g_n(x)-g_n(y))^2\nu(\d x-y)\d y\\
								&\le 2\int_{[-N,N]\times (\mathbb{R}\backslash [-N,N]) }{\Big(c\frac{1}{2^n}\Big)^2}\nu(\d x-y)\d y=\dfrac{2c^2}{ 2^{2n}}\int_{\IR}(|x|\wedge 2N)\nu(\d x).
							\end{align*}
							The second part is the integral on $[-N,N]\times[-N,N]\setminus D_n$
							where $$D_n=\{|x-y|>1/2^n,|x|<N,|y|<N\},$$
							and we have\begin{align*}
								I^\prime_2&= 
								\int_{[-N,N]\times[-N,N]\setminus D_n}
								(g_n(x)-g_n(y))^2\nu(dx-y)dy\\
							&\le (2c/2^n)^2\int_{\frac{1}{2^n}\le |x-y|\le 2N, |y|\le N
							}\nu(\d x-y)dy\le \dfrac{4c^2}{2^{2n}}\int_{ \frac{1}{2^n}\le |x|\le 2N}2N\nu(dx)\\
							&=
							\dfrac{8Nc^2}{2^{n}}\int_{\frac{1}{2^n}\le |x|\le 2 N}\dfrac{1}{2^n}\nu(dx)\le
							\dfrac{8Nc^2}{2^{n}}\int_{-2N}^{2N}|x|\nu(dx).\end{align*}
						The third part is the integral on $D_n$ which
						is the union $\displaystyle{\bigcup_{-N2^n\le i<N2^n}\Big[\dfrac{i}{2^n},\dfrac{i+1}{2^n}\Big]^2}$ and its
						complement. It is essential for the former
						that $$|g_n(x)
						-g_n(y)|=|f(x)-f(y)|\le c|x-y|\ \text{for}\  
						x,y\in \Big[\dfrac{i}{2^n},\dfrac{i+1}{2^n}\Big],$$ and hence it follows that
						\begin{align*}
							I^\prime_3
							&\le \sum_{-N2^n\le i<N2^n}
							\int_{x,y\in[\frac{i}{2^n},\frac{i+1}{2^n}]}c^2(x-y)^2\nu(dx-y)
							dy\\
							&\ \ +2\sum_{-N2^n\le i<N2^n}
							\int_{x\in[\frac{i+1}{2^n},\frac{i+2}{2^n}],y\in [\frac{i}{2^n},\frac{i+1}{2^n}],
								x<y+\frac{1}{2^n}}(2c/2^n)^2\nu(dx-y)dy\\
							&\le 2N2^n\left(\dfrac{c^2}{2^{n}}\int_{|x|\le \frac{1}{2^n}} x^2\nu(dx)+
							\dfrac{4c^2}{2^{2n}}\int_{|x|\le \frac{1}{2^n}} |x|\nu(dx)\right).
						\end{align*}
						By the assumption, this concludes that $I=I'_1+ I'_2+ I'_3$ converges to $0$.
						
						Finally, we prove \eqref{levy2}. Similarly, we shall compute the $\EE$-norm of
						$h_\rho$.
							It follows from the inequality \eqref{keyinequality}
							that $$|\hat h_{\rho}(\xi)|\le 2(|\xi|^{-1}\wedge 1).$$
							Combining $0\le 1-\cos y\le 2(1\wedge y^2)$, we have 
							\begin{align*} \EE(h_{\rho},h_\rho)=&
								\int_{\mathbb{R}}|\hat{h}_\rho(\xi)|^2|\psi(\xi)|\d\xi\\
								\le & 4\int_{\mathbb{R}^2}(\xi^{-2}\wedge1)(1-\cos(\xi x))\d\xi \nu(\d x)\\
								\leq
								&8\int_{\IR} \nu(dx)\int_{\IR}(\xi^{-2}\wedge1)(1\wedge(\xi x)^2)\d\xi\\
								=&8\int_{\IR}\left((2-\dfrac{2}{3|x|})1_{\{|x|\ge 1\}}
								+(2-\dfrac{2}{3}|x|)|x|1_{\{|x|<1\}}\right)\nu(\d x)\\
								\le &16\int_{\IR}
							(1\wedge |x|)\nu(\d x), 
						\end{align*}
					which is a finite number independent of $\rho$.
					Now the other parts of the proof are exactly the same. We mention that since \eqref{levy2} already shows $h_\rho\in \cF$, so we do not need the `Step 1' part in the proof of Lemma \ref{hardy}. 
				\end{proof}
				
				We next generalize the other part of Theorem \ref{mainthm}. In this part, we assume $\sigma\ge 0$.
				
				\begin{corollary}
					If there exist some positive constants $c$ and $M$ such that $\psi(\xi)\geq c\abs{\xi}^\alpha$,$\forall\, \abs{\xi}\geq M$ for some $\alpha\in [1,2]$, then the L\'evy process with characteristic exponent $\psi$ has proper regular subspaces. In particular, when the parameter of quadratic term $\sigma>0$, the corresponding process has proper regular subspaces.
				\end{corollary}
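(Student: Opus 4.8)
The plan is to produce one explicit proper regular subspace by re-using the scale function constructed in Section~\ref{ifpart} and transferring its properness from $H^{\alpha/2}$ to $\FF$ by a soft comparison of the two forms. The first step is to record what the hypothesis means at the level of quadratic forms. Since $\psi(\xi)\ge c|\xi|^\alpha$ for $|\xi|\ge M$ and $\psi\ge 0$ everywhere, there is a constant $C>0$ such that $|\xi|^\alpha+1\le C(\psi(\xi)+1)$ for all $\xi$ (use the lower bound for $|\xi|\ge M$ and boundedness for $|\xi|<M$). By Plancherel this yields $\EE^{\alpha/2}_1\le C\,\EE^\psi_1$ on $\FF$; in particular $\FF\subset H^{\alpha/2}$ and any $\EE^\psi_1$-convergent sequence in $\FF$ is $\EE^{\alpha/2}_1$-convergent.

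Next I would take $s\in\bS^{\alpha/2}_\loc$ to be exactly the scale function of Section~\ref{ifpart}, $s(x)=\int_0^x 1_G(y)\,\d y$, with $G$ the dense open set for which $\overline{\FF_s}$ is a \emph{proper} regular subspace of $H^{\alpha/2}$; when $\sigma>0$ one simply applies the hypothesis with $\alpha=2$ and $c=\sigma/2$, so that $\{s'=0\}=G^c$ has positive Lebesgue measure and $\overline{\FF_s}$ is proper in $H^1$ by the Brownian characterization of \cite{fang2006algebraic}. The decisive feature of this $s$ is that $s'=1_G\in\{0,1\}$, so $s$ is $1$-Lipschitz and every $g=f\circ s$ with $f\in\bL_s$ is Lipschitz with compact support. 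For such $g$ both energies are finite for any admissible $(\sigma,\nu)$: the jump energy $\tfrac12\int_{\IR}\big(\int_\IR(g(y+z)-g(y))^2\,\d y\big)\nu(\d z)$ is controlled using $\int_\IR(g(y+z)-g(y))^2\,\d y\le C'(z^2\wedge 1)$ together with $\int(1\wedge z^2)\nu(\d z)<\infty$, while the Gaussian energy is finite because $\int_\IR (g')^2\,\d x=\int_\IR(f'\circ s)^2 1_G\,\d x=\int_\IR(f'\circ s)^2\,\d s=\int_{s(\IR)}(f')^2\,\d u<\infty$, where I used $(s')^2=s'$. Hence $\FF_s\subset\FF$, and since $\FF_s$ is a lattice separating points it is uniformly dense in $C_c$; it then follows as in Theorem~\ref{char1} that $\overline{\FF_s}^{\EE^\psi_1}$ is a regular Dirichlet subspace of $\FF$.

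Properness would then follow by contradiction. If $\overline{\FF_s}^{\EE^\psi_1}=\FF$, then $\FF_s$ is $\EE^\psi_1$-dense in $\FF$, hence $\EE^{\alpha/2}_1$-dense in $\FF$ by the comparison of the first step; since $C_c^\infty\subset\FF$ is $\EE^{\alpha/2}_1$-dense in $H^{\alpha/2}$, transitivity of closures forces $\overline{\FF_s}^{\EE^{\alpha/2}_1}=H^{\alpha/2}$, contradicting the properness of $\overline{\FF_s}$ in $H^{\alpha/2}$ used above. I expect the genuine obstacle to be the inclusion $\FF_s\subset\FF$: because the hypothesis supplies only a \emph{lower} bound on $\psi$, there is no upper bound on $\EE^\psi(g,g)$ available, so finiteness of both the Gaussian and the jump energy must be extracted by hand from the special structure $s'\in\{0,1\}$ and from the integrability $\int(1\wedge z^2)\nu(\d z)<\infty$, as sketched above; the remaining verifications (the lattice/regularity step and the density bookkeeping) are routine.
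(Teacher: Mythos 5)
Your proposal is correct, and its skeleton coincides with the paper's: both take the scale function $s(x)=\int_0^x 1_G(y)\,\d y$ from Section \ref{ifpart} and exploit the comparison $\EE^{\alpha/2}_1\le C\,\EE^\psi_1$ that the Fourier lower bound on $\psi$ provides. Where you genuinely diverge is in how properness is transferred to the L\'evy form. The paper routes it through Theorem \ref{char2}: it asserts that \eqref{proper2}$\Leftrightarrow$\eqref{proper3}$\Leftrightarrow$\eqref{proper4}$\Rightarrow$\eqref{proper1} carry over to $(\EE^\psi,\FF)$ with the stated $\alpha$, and then re-uses the capacity computation of Section \ref{ifpart}; this in principle requires rechecking the duality machinery (the isometry $D$ and the $(-\Delta)^{\alpha/4}$ pairing) in the L\'evy setting, which the paper leaves implicit. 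You instead bypass the criteria entirely with a closure/contradiction argument: if $\cF_s$ were $\EE^\psi_1$-dense in $\FF$, the norm comparison would make it $\EE^{\alpha/2}_1$-dense in $\FF\supset C^\infty_c$, hence in all of $H^{\alpha/2}$, contradicting the properness of $\overline{\cF_s}^{\EE^{\alpha/2}_1}$ already established in Section \ref{ifpart} (and, for $\sigma>0$, by the Brownian criterion of \cite{fang2006algebraic}). This only uses properness in the stable/Brownian model and no new duality, so it is more self-contained. You also supply the verification $\cF_s\subset\FF$ via the structure $s'=1_G\in\{0,1\}$ and $\int(1\wedge z^2)\,\nu(\d z)<\infty$, a point the paper dismisses as ``easy to see.'' Both routes are valid; yours trades the paper's appeal to the (unverified in this generality) equivalences of Theorem \ref{char2} for a soft density argument, at the cost of proving only existence of a proper subspace rather than re-deriving the characterization.
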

				
				\begin{proof}
					We again consider the scale function in the form of \eqref{sss}. It is easy to see that $\overline{\cF_s}^{\EE_1}$ is a regular subspace of $\cF$. 
					Considering $\overline{\cF_s}^{\alpha/2} \subset \overline{\cF_s}^{\EE_1}$, we can check that \eqref{proper2}$\Leftrightarrow$\eqref{proper3}$\Leftrightarrow$\eqref{proper4}$\Rightarrow$\eqref{proper1} still holds in this case with $\alpha$ given by the statement of the corollary. Then the conclusion follows from the proof in Section \ref{ifpart}. 
				\end{proof}
				
				\bibliographystyle{amsalpha}
				\bibliography{regular}
				\nocite{*}
			\end{document}